\newtheorem{thm}{Theorem}[section]
\newtheorem{remark}[thm]{Remark}
\newtheorem{lem}[thm]{Lemma}
\newtheorem{lemma}{Lemma}
\newtheorem{cor}[thm]{Corollary}
\newcommand{\dx}{\, dx}
\newcommand{\D}{\partial}
\newcommand{\R}{\mathbb{R}}
\providecommand{\dv}{\,\mathrm{div}\,}
\renewcommand{\div}{\textrm{div}}
\definecolor{mypink}{RGB}{219, 48, 122}
\definecolor{myblue}{RGB}{0, 0, 122}
\definecolor{myred}{rgb}{0.86, 0.08, 0.24}
\date{\empty}
\begin{document}

  \title{Global existence for a 3D Tropical Climate Model with damping
    and small initial data in $\dot H^{\frac{1}{2}}(\R^3)$}
    
\author{
Diego Berti\footnote{Universit\`a di Pisa, Dipartimento di Matematica, Largo
    Bruno Pontecorvo, 5, 56127 Pisa, Italia, \newline Email: diego.berti@dm.unipi.it}
\and
Luca Bisconti\footnote{Universit\`a di Firenze, Dipartimento di Matematica e
    Informatica ``U.~Dini'', Viale Morgagni 67/a, 50134 Firenze, Italia,\newline Email: luca.bisconti@unifi.it}
\and
Davide Catania\footnote{Universit\`a eCampus, Facolt\`a di Ingegneria, Via
    Isimbardi\, 10, I-22060 Novedrate (CO), Italia,\newline Email: davide.catania@unibs.it}
  }  

\maketitle

  \begin{abstract}
We consider a 3D Tropical Climate Model with damping
terms in the equation of the barotropic mode $u$ and in the equation
of the first baroclinic mode $v$ of the velocity.  The equation for
the temperature $\theta$ is free from dampings.  We prove global
existence in time for this system assuming the initial data $(u_0,
v_0,\theta_0)$ small, in terms of the homogeneous space $\dot
H^{\frac{1}{2}}(\R^3)$.
\end{abstract}

\vspace{0.5 cm}
\noindent \textbf{AMS Subject Classification:}  35Q35; 76D03, 35Q30,
    35B65

\smallskip
\noindent
\textbf{Keywords:} Tropical Climate Model, Regularity criterion, Navier--Stokes equations.

\section{Introduction} \label{sec:intro}
We consider the following 3D tropical Climate Model with damping, i.e.
\begin{equation}\label{TCM-gen}
  \begin{aligned}
    &\D_t u+ (u\cdot \nabla) u -\nu\Delta u + \sigma_1 |u|^{\alpha -1}u +\nabla \pi+\dv  (v \otimes v)=0,\\
    &\D_t v+ (u \cdot \nabla) v -\eta\Delta v + \sigma_2 |v|^{\beta -1}v + (v\cdot \nabla) u  +\nabla\theta =0,\\
    & \D_t \theta + (u \cdot \nabla) \theta  -\mu \Delta \theta  + \dv  v =0,\\
    &\dv  u = 0,\\
    & u (x, 0) =u_0,\,\, v (x, 0)=v_0,\,\, \theta (x, 0)=\theta_0,
  \end{aligned}
\end{equation}
where, for $x\in \R^3$ and $t \geq 0$, $u = (u_1(x,t), u_2(x,t), u_3(x,t))$
and $v = (v_1(x, t), v_2(x, t), v_3(x,t))$ denote, respectively, the barotropic mode
and the first baroclinic mode of the velocity, $\theta = \theta(x,t)$ indicates
the temperature, and $\pi=\pi(x,t)$ represents the pressure that acts as a Lagrange multiplier for this system.
Also, $\nu>0$, $\eta>0$, and  $\mu>0$.

Here, we assume $\sigma_1, \sigma_2 > 0$, while
$\alpha, \beta \geq 1$. In particular, $\alpha$ and $\beta$ are the
damping coefficients, for which further appropriate restrictions will
be introduced later.
When $\sigma_1=\sigma_2=0$, the damping terms in \eqref{TCM-gen} are
dropped and the system takes the form of the viscous Tropical Climate
Model. The original version of this model, without diffusion
(i.e. $\nu = \eta =\mu =0$) and set in $\mathbb{R}^2$, was
derived first by Frierson, Majda and Pauluis \cite{Frierson} (see also
\cite{Maj}), while for the analysis of the viscous version of it
(still in $\mathbb{R}^2$) we refer the reader to Li and
Titi~\cite{Li-Titi} (see also \cite{Li-Titi-1}).

As its name suggests, in \cite{Frierson} the system arises from the
context of meteorology, in formalizing the interaction of large scale
flows and precipitations in the tropical atmosphere. Roughly speaking,
system~\eqref{TCM-gen}, with $\sigma_1=\sigma_2=\nu = \eta =\mu =0$,
comes from the hydrostatic Boussinesq equations by performing a vertical
decomposition of the velocity field, followed by a Galerkin truncation
up to its first baroclinic mode.

While the 2D case (without dampings) was covered by the seminal work  \cite{Li-Titi} (see also \cite{Bis-2021, 
Dong-2020, Dong-Wu-2019, Li-Zhai-Yin, Ma-Wan, Niu, Wan, Ye}), in studying~\eqref{TCM-gen} and its variants (in $\mathbb{R}^3$), a main open problem
is the issue of global well-posedness of strong solutions: See \cite{BBC-JMAA, Yuan-Chen, Yuan-Zhang} for the 3D Tropical Climate
Model with damping. For the 3D case without damping but still modified with respect 
to the standard viscous system (e.g. due to the presence of fractional diffusion or differential regularizing terms)
see the recent papers \cite{BBC-AA, Chen-Yuan-Zhang, Zhang-Xu, Zhu}. In \cite{Ye-Zhu}, the authors deal with 
regularity under small initial data in the case of temperature-dependent systems.

Without damping terms, what is known is the local existence of strong
solutions \cite{Ma-Jiang-Wan} and {the global existence} under
small initial data (see \cite{Wang-Zhang-Pan, Zhang-Xu}). The global well-posedness of
solutions to a 2D tropical climate model with dissipation in the
equation of the first baroclinic mode of the velocity, under the
hypotheses of small initial data, was studied by Wan \cite{Wan}, Ma
and Wan \cite{Ma-Wan}.

In the case of a \emph{large damping effect}, namely if $\alpha, \beta \ge 4$, the related
terms help to deal with a priori and energy estimates, having a direct
regularizing action.  In \cite{Yuan-Zhang}, under the previous
assumptions, the authors prove global existence and
uniqueness. With similar techniques, but only requiring
$\alpha \geq 4$, in \cite{Yuan-Chen} the authors still obtain the same
results. On the other hand, if $1\leq \alpha, \beta < 4$,
the situation is, in principle, rather far from the previous ones, and
in fact the presence of the damping terms seems to play a more subtle
role. About this point, we refer to \cite{BBC-JMAA} where
the authors establish a regularity criterion in Besov spaces,
under the assumption $3\le \alpha, \beta <4$.

  In this paper, requiring $5/2\leq \alpha, \beta < 4$, we give a new
   regularity criterion, in terms of small initial data in
   $\dot H^{\frac{1}{2}}(\R^3)$, actually bounded in
   $H^2(\mathbb{R}^3)$ and sufficiently small in
   $\dot H^{\frac{1}{2}}(\R^3)$, for the global strong solutions to
   system \eqref{TCM-gen} (see Theorem~\ref{main} below). The existence,
   regularity and uniqueness of local solutions is given in Lemma
   \ref{bound-H2}.
    
   In order to prove Theorem \ref{main}, we first obtain a new
   blow-up criterion (see Theorem \ref{th:bu criterion}) in terms of
   proper integrability in time of \textrm{BMO}-norms of the local solution
   $(u,v,\theta)$. To apply Theorem \ref{th:bu criterion}, it
   suffices to bound uniformly in time the $\dot H^{\frac32}$-norm of
   $(u,v,\theta)$ (see Remark~\ref{rmk-Linfty-H3/2}). A key ingredient
   that we use is stated in Corollary \ref{cor:monotone}: By means of
   the smallness hypothesis, we are able to prove that the low-order
   norms in $\dot H^{\frac12}$ and $\dot H^1$ of $(u,v,\theta)$ remain
   bounded by the corresponding norms of the initial data. This is
   obtained combining together the $\dot H^{\frac12}$- and
   $\dot H^1$-energy estimates given in Section~\ref{sec:main}.
    
   The main difficulties arise from the fact that we need to perform
   some delicate estimates in the spaces $\dot H^{\frac12}(\R^3)$ and
   $\dot H^{\frac32}(\R^3)$, and in doing so we 
   consider fractional derivatives of the various terms involved in
   the equations~\eqref{TCM-gen} and, among others, of the damping
   terms (on this regard see also \cite{Chae, Chae-Lee, Zhao,
     Zhang-Xu}). The approach used here derives from the analysis
   proposed in \cite{Chae, Chae-Lee} and, when $\sigma_1=\sigma_2=0$
   (see the Appendix), it still works and allows us to generalize, or
   to provide an alternative proof, to the results given in
   \cite{Zhang-Xu} and \cite{Wang-Zhang-Pan}, respectively.
   \smallskip
    
   The plan of the paper is as follows: In Section
   \ref{sec:preliminaries-main}, we give the basic notations, we
   collect the main tools from literature and we state the main
   outcomes of the paper. Section \ref{sec-stime} takes care of
   providing local well-posedness and the proof of Theorem \ref{th:bu
     criterion}. Section \ref{sec:main} is dedicated to the proof of
   Theorem \ref{main}, passing through the demonstration of Corollary
   \ref{cor:monotone}.

  \section{Preliminaries and main results} \label{sec:preliminaries-main}
  For $p \geq 1$, we indicate by $L^p=L^p(\R^3)$ the usual Lebesgue
  space (not distinguishing scalar from vector fields, with a safe abuse of 
  notations), and with $\|\, \cdot\, \|_p=\|\, \cdot\, \|_{L^p}$ its usual
  norm; we set $\|\, \cdot\, \|= \|\, \cdot\, \|_2 $ when $p=2$.  For
  $s >0$ and $p\ge 1$, we denote by $W^{s,p}=W^{s,p}(\mathbb{R}^3)$
  the Sobolev space, with its dual $W^{-s, p\rq{}}$, where
  $1/p+1/p\rq{}=1$. When $p=2$, we write $H^s=W^{s,2}(\mathbb{R}^3)$
  with norm $\|\cdot\|_{H^s}$; $\dot H^s$ denotes the standard
  homogeneous Sobolev space with norm $\|\, \cdot\, \|_{\dot
    H^s}$ (see, e.g. \cite{Ad}). With $\Lambda^s$ we mean the operator defined (formally) as
  $\Lambda^s f=(-\Delta)^\frac{s}{2} f$, such that
  $\widehat{\Lambda^s f}(\xi)=|\xi|^s\hat{f}(\xi)$, for
  $\xi \in \mathbb{R}^3$. Also, with \textrm{BMO} we denote the space of
  \emph{bounded mean oscillation functions}.
  Moreover, we set $H^2(\R^3) ^3\doteq H^2(\R^3) \times H^2(\R^3)
 \times H^2(\R^3)$. 

 As a further matter of notation, let us introduce 
    $\| (u, v, \theta)\|^2\doteq \| u\|^2 +\| v\|^2+\| \theta\|^2$,
    and the same for  $\|\nabla (u, v, \theta)\|^2$, $\|\Delta (u,
v, \theta)\|^2$, $\|\nabla\Delta (u, v, \theta)\|^2$ and $\|\Lambda^s ( u, v, \theta)\|^2$, $s>0$.
  
 For the remainder of the paper, the symbol $C$ indicates a generic constant
  independent of the solution.
  
  \subsection{Some inequalities} \label{subsec:utility-estimates}
  Here below, we collect the main estimates that we will use in the sequel.
  \begin{itemize}
  \item Gagliardo--Nirenberg's inequality in $\mathbb{R}^3$ (see
    \cite{Nirenberg-1959}, \cite{Zhao}):
\newline
    Let $0 \leq m$, $s \leq l$, then we have
  \begin{equation*}
    \label{e:GN-Hajaiej}
    \|\Lambda^s f\|_{p}\le C \|\Lambda^mf\|_{q}^{1-\kappa}\|\Lambda^{l}f\|_{r}^\kappa,
  \end{equation*}
  where $\kappa \in [0, 1]$ and $\kappa$ satisfies $
    \frac{s}{3}-\frac{1}{p}=\left(\frac{m}{3} -\frac{1}{q}\right)(1-\kappa)
    +\left(\frac{l}{3}-\frac{1}{r}\right) \kappa$.  When $p = \infty$, we require that $0 < \kappa < 1$.

\item
  Kato--Ponce product estimate
  \cite{Kato-Ponce-1988} {(see also \cite{G-2019, KPV1991})}, i.e.:
  \begin{equation}\label{KP}
    \|\Lambda^s(fg)\|_p \leq C\big(\|f\|_{p_1} \|\Lambda^s g\|_{q_1}
    + \|g\|_{p_2}\|\Lambda^s f\|_{q_2} \big),
  \end{equation}
  with $s>0$, $1<p<\infty$, $1< q_1, q_2< \infty$ and
  $1< p_1, p_2\leq \infty$ such that
  $\frac{1}{p} = \frac{1}{p_1} + \frac{1}{q_1} = \frac{1}{p_2} +
  \frac{1}{q_2}$.

\item
   Commutator estimate (see, e.g.,
  \cite{Kato-Ponce-1988, KPV1991}), that is:
  \begin{equation} \label{KPV}
    \begin{aligned}
      \|[\Lambda^s,f]g\|_p &= \|\Lambda^s(fg) - f\Lambda^s g\|_p \\
      &\leq C\big(\|\nabla f\|_{p_1} \|\Lambda^{s-1}g\|_{q_1} +
      \|g\|_{p_2}\|\Lambda^s f\|_{q_2} \big),
    \end{aligned}
  \end{equation}
  with $s>0$, $1<p<\infty$, $1< q_1, q_2< \infty$ and
  $1< p_1, p_2\leq \infty$ such that
  $\frac{1}{p} = \frac{1}{p_1} + \frac{1}{q_1} = \frac{1}{p_2} +
  \frac{1}{q_2}$.
  
  \item   From  \cite{Kozono2000} (see also
  \cite{Zhang-Xu}): 
  \begin{equation} \label{Kozono-BMO} \| \D^\alpha f\cdot \D^\beta
    g\|_r \leq C \big(\|f\|_{{}_{\textrm{BMO}}}\|
    (-\Delta)^{\frac{|\alpha|+|\beta|}{2}}g\|_r + \|g\|_{{}_{\textrm{BMO}}} \|
    (-\Delta)^{\frac{|\alpha|+|\beta|}{2}}f\|_r\big),
  \end{equation}
  where $1 < r < \infty$, $\alpha = (\alpha_1, \alpha_2,\alpha_3)$ and
  $\beta = (\beta_1, \beta_2, \beta_3)$ are multi-indices with
  $|\alpha|, |\beta| \geq 1$,
  $\D^\alpha ={\partial_{x_1}^{\alpha_1}\, \partial_{x_2}^{\alpha_2}\,
    \partial_{x_3}^{\alpha_3}}$ and
  $\D^\beta ={\partial_{x_1}^{\beta_1}\, \partial_{x_2}^{\beta_2}\,
    \partial_{x_3}^{\beta_3}}$.
    
\item From \cite{Kozono2000}, with $1<p<\infty$, we have:
      \begin{equation} \label{BMO} \|f \,\cdot\, g\|_p \leq
    C(\|f\|_p\|g\|_{_{\textrm{BMO}}} + \|f\|_{_{\textrm{BMO}}}\|g\|_p).
  \end{equation}
  
  \item  From \cite[(1.11)]{Kozono2002}, with $s>3/2$:
  \begin{equation}
    \label{e:kozono2002}
    \|f\|_{\infty}\le C\big(1+\|f\|_{_{\textrm{BMO}}}\, \ln (e+ \|f\|_{H^s})\big).
  \end{equation}
  \end{itemize}

  \subsection{The main results} \label{subsec:reg-result}
  
  In Section \ref{sec-stime}, we derive a \emph{blow-up criterion} for the local solution of \eqref{TCM-gen}. 
  
    \begin{thm}[Blow-up criterion] \label{th:bu criterion}  Assume 
      $5/2\leq \alpha, \beta < 4$, and
      $(u_0, v_0, \theta_0) \in H^2(\R^3)^3$, with $\dv u_0 = 0$. Let $(u,v,\theta)(t)$ be the local solution of \eqref{TCM-gen}. Define
  \begin{equation}
  \label{e:Pi}
  \Pi(T) \doteq 
  \int_0^T C \big(1
      +\|u(t)\|_{{}_{\textrm{BMO}}}^{8} + \|v(t)\|_{{}_{\textrm{BMO}}}^{8} +
      \|\theta(t)\|_{{}_{\textrm{BMO}}}^2 +
      \|u(t)\|^{\alpha+1}_{\alpha+1}+\|v(t)\|^{\beta+1}_{\beta+1}
      \big)dt.
  \end{equation}
      
      Then , for $0< T^\ast < +\infty$, we have that  
  \begin{equation*}
  \|\Delta (u, v, \theta) (T)\| < +\infty, \ \mbox{ for every } \ 0<T< T^\ast \ \mbox{ and } \ \limsup_{T \to {T^\ast}} \|\Delta (u, v, \theta) (T)\|=+\infty,
  \end{equation*}
  if and only if 
  \begin{equation*}
  \Pi(T) < +\infty \ \mbox{ for every } \ 0<T<T^\ast \ \mbox{ and } \ \Pi(T^\ast) =+\infty.
  \end{equation*}
  \end{thm}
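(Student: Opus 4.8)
\medskip
\noindent\textbf{Proof proposal.}
The plan is to reduce the whole statement to a single \emph{a priori} differential inequality for the top-order energy
\[
y(t)\doteq \|\Delta(u,v,\theta)(t)\|^{2},
\]
namely an estimate of the form
\[
\frac{d}{dt}\,y(t)+\nu\|\nabla\Delta u\|^{2}+\eta\|\nabla\Delta v\|^{2}+\mu\|\nabla\Delta\theta\|^{2}\ \le\ \Phi(t)\,(1+y(t)),
\]
where $\Phi(t)\doteq C(1+\|u\|_{\textrm{BMO}}^{8}+\|v\|_{\textrm{BMO}}^{8}+\|\theta\|_{\textrm{BMO}}^{2}+\|u\|_{\alpha+1}^{\alpha+1}+\|v\|_{\beta+1}^{\beta+1})$ is exactly the integrand of $\Pi$ in \eqref{e:Pi}. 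Granting this, Gr\"onwall's lemma gives $1+y(T)\le(1+y(0))\,e^{\Pi(T)}$ on the interval of existence; hence $\Pi(T^\ast)<+\infty$ forces $\sup_{[0,T^\ast)}\|\Delta(u,v,\theta)\|<+\infty$, and by the local existence result of Lemma~\ref{bound-H2} (whose existence time is bounded below by a function of the $H^{2}$-norm of the data) the solution then extends beyond $T^\ast$, which rules out $\limsup_{T\to T^\ast}\|\Delta(u,v,\theta)(T)\|=+\infty$. Conversely, since in $\R^{3}$ one has $H^{2}\hookrightarrow L^{\infty}\hookrightarrow\textrm{BMO}$ and $\dot H^{1}\hookrightarrow L^{6}$ (with $\alpha+1,\beta+1<5<6$), while the elementary $L^{2}$-energy identity yields $\|(u,v,\theta)(t)\|\le\|(u_{0},v_{0},\theta_{0})\|$, the interpolation $\|\nabla f\|^{2}\le\|f\|\,\|\Delta f\|$ gives $\Phi(t)\le C(1+y(t))^{4}$, so that $\sup_{[0,T^\ast)}\|\Delta(u,v,\theta)\|<+\infty$ implies $\Pi(T^\ast)=\int_{0}^{T^\ast}\Phi(t)\dt<+\infty$. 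Combining these two inequalities with the continuation criterion and the continuity of $t\mapsto\|\Delta(u,v,\theta)(t)\|$ on compact subintervals of $[0,T^\ast)$ yields the claimed equivalence, the ``$\Pi(T)<+\infty$ for every $T<T^\ast$'' and ``$\|\Delta(u,v,\theta)(T)\|<+\infty$ for every $T<T^\ast$'' clauses matching each other as well.

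It then remains to produce the differential inequality. First I would apply $\Delta$ to each of the first three equations in \eqref{TCM-gen}, test the resulting identities in $L^{2}(\R^{3})$ against $\Delta u$, $\Delta v$ and $\Delta\theta$ respectively, and add. The pressure drops out since $\dv u=0$, the term $\int(u\cdot\nabla)\Delta u\cdot\Delta u\dx$ vanishes for the same reason, and the two \emph{linear} coupling contributions $\int\nabla\Delta\theta\cdot\Delta v\dx$ and $\int\Delta\dv v\cdot\Delta\theta\dx$ cancel after one integration by parts; the viscous and diffusive terms furnish the good quantity $\nu\|\nabla\Delta u\|^{2}+\eta\|\nabla\Delta v\|^{2}+\mu\|\nabla\Delta\theta\|^{2}$ on the left. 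What survives are the commutator term $\int[\Delta,(u\cdot\nabla)]u\cdot\Delta u\dx$ together with its counterparts carrying $v$ and $\theta$, the bilinear coupling terms coming from $\dv(v\otimes v)$ and $(v\cdot\nabla)u$, and the damping terms. Writing $\Delta=\Lambda^{2}$, the convective and coupling terms are treated by the Kato--Ponce and commutator estimates \eqref{KP}--\eqref{KPV} and, crucially, by the Kozono-type product estimates \eqref{Kozono-BMO}--\eqref{BMO}, always arranging the bounds so that the highest-order factor is retained in $L^{2}$ as $\|\nabla\Delta\,\cdot\,\|$ while one of the lower-order factors is measured in $\textrm{BMO}$; after suitable Gagliardo--Nirenberg interpolations of the intermediate norms and a Young inequality absorbing the $\|\nabla\Delta\,\cdot\,\|$-factor into the dissipation, these contributions are bounded by $C(\|u\|_{\textrm{BMO}}^{8}+\|v\|_{\textrm{BMO}}^{8}+\|\theta\|_{\textrm{BMO}}^{2})(1+y(t))$, the smaller exponent attached to $\theta$ reflecting the fact that the temperature equation carries only a transport nonlinearity.

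The delicate point, and the step I expect to be the main obstacle, is the treatment of the damping terms $\sigma_{1}\int\Delta(|u|^{\alpha-1}u)\cdot\Delta u\dx$ and $\sigma_{2}\int\Delta(|v|^{\beta-1}v)\cdot\Delta v\dx$. Expanding the two derivatives by the Leibniz rule produces, besides the manifestly nonnegative contribution of the type $\int|u|^{\alpha-1}|\Delta u|^{2}\dx$ (which may be kept or simply dropped), cross terms of the schematic form $\int|u|^{\alpha-2}|\nabla u|^{2}|\Delta u|\dx$ with no definite sign. These have to be estimated by H\"older's inequality, placing $|u|^{\alpha-2}$ in $L^{(\alpha+1)/(\alpha-2)}$ so as to bring in the factor $\|u\|_{\alpha+1}$ supplied by the $L^{2}$-energy identity, and splitting the remaining integrability among $\|\nabla u\|$, $\|\Delta u\|$ and $\|\nabla\Delta u\|$ via Gagliardo--Nirenberg; this is precisely where the restriction $5/2\le\alpha,\beta<4$ enters, both to render the relevant H\"older and interpolation exponents admissible and to guarantee that, once the dissipative factor is absorbed by Young's inequality and $\|\nabla u\|^{2}\le\|u\|\,\|\Delta u\|$ is used, the leftover is dominated by $C(1+\|u\|_{\alpha+1}^{\alpha+1})(1+y(t))$ (and analogously for $v$). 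Adding up all the estimates produces the announced differential inequality with $\Phi$ equal to the integrand of $\Pi$, which completes the argument. (Note that at this $H^{2}$ level the damping nonlinearities are differentiated only twice, so these computations are classical; the genuinely fractional manipulations in the spirit of \cite{Chae,Chae-Lee} will be needed only for the $\dot H^{1/2}$ and $\dot H^{3/2}$ estimates underlying Theorem~\ref{main}.)
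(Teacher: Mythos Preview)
Your overall architecture---derive $\frac{d}{dt}y\le\Phi(t)(1+y)$ with $\Phi$ the integrand of $\Pi$, then apply Gr\"onwall and the local continuation principle---is exactly what the paper does, and your converse argument is fine. The gap is in the damping terms, and it propagates to your explanation of where the exponent~$8$ on the $\textrm{BMO}$ norms comes from.

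In the paper the convective and coupling integrals $I_1$--$I_6$ are handled with the Kozono--Taniuchi estimates and produce only $\|u\|_{\textrm{BMO}}^{2}$, $\|v\|_{\textrm{BMO}}^{2}$, $\|\theta\|_{\textrm{BMO}}^{2}$ (and $\|v\|_{\textrm{BMO}}^{6}$ from the term $v\,\dv v$, via the logarithmic inequality~\eqref{e:kozono2002}). The exponent~$8$ is generated \emph{by the damping terms} $I_7,I_8$, not by the transport part. Concretely, the paper integrates by parts once to write $|I_7|\le\|\,|u|^{\alpha-1}\nabla u\,\|\,\|\nabla\Delta u\|$, then uses the bilinear $\textrm{BMO}$ estimate~\eqref{BMO} in the form $\|\,|u|^{2}\,\|_{r}\le C\|u\|_{\textrm{BMO}}\|u\|_{r}$ to peel off a $\textrm{BMO}$ factor from $\|\,|u|^{\alpha-1}\,\|_{p}$. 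After choosing $p=\tfrac{2(\alpha+1)}{\alpha-1}$ and interpolating $\|\nabla u\|_{\alpha+1}$ between $\|u\|$ and $\|\Delta u\|$, one gets
\[
|I_7|\le C\big(1+\|u\|_{\textrm{BMO}}^{(\alpha^2-1)/2}+\|u\|_{\alpha+1}^{\alpha+1}\big)(1+\|\Delta u\|^{2})+\varepsilon\|\nabla\Delta u\|^{2},
\]
and $(\alpha^{2}-1)/2<8$ on $[5/2,4)$ yields the stated $\Phi$.

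Your plan---expand $\Delta(|u|^{\alpha-1}u)$ by Leibniz, keep the sign-definite piece, and bound the cross term $\int|u|^{\alpha-2}|\nabla u|^{2}|\Delta u|$ by placing $|u|^{\alpha-2}$ in $L^{(\alpha+1)/(\alpha-2)}$---does not close over the whole range. With that H\"older split the remaining factor forces $\|\nabla u\|_{p}$ with $p=\tfrac{4(\alpha+1)}{5-\alpha}$; interpolating between $\|u\|$ and $\|\nabla\Delta u\|$ gives an exponent $7/3-2/p$ on $\|\nabla\Delta u\|$, which exceeds~$2$ as soon as $\alpha>13/5$, so Young's inequality cannot absorb it into the dissipation and you are left with a super-linear power of $\|\Delta u\|$ (hence no Gr\"onwall). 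A secondary issue is that for $5/2\le\alpha<3$ the pointwise Leibniz expansion of $\Delta(|u|^{\alpha-1}u)$ is singular at $\{u=0\}$; the paper sidesteps this by integrating by parts once and working only with $\nabla(|u|^{\alpha-1}u)$. In short, the $\textrm{BMO}$ factor on the damping is not cosmetic: it is what makes the estimate linear in $1+y$.
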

  
    If a time $T^\ast>0$ as in Theorem~\ref{th:bu criterion}
  does exist, it is called the \emph{first blow-up time}. Observe
  that, for every $0<T<T^\ast$, we also have
  \begin{equation*}
    \int_{0}^{T}\|\nabla \Delta (u,v,\theta)(t)\|^2\,dt < +\infty.
  \end{equation*}
  
  In Section \ref{sec:main}, by means of energy estimates, we make use
  of Theorem~\ref{th:bu criterion} in order to obtain the following
  global existence result,
  in terms of $\dot H^{\frac12}$-smallness of the initial data.
  \begin{thm}[Global existence and regularity under small initial data] \label{main} Assume
      $5/2\leq \alpha, \beta < 4$, and
      $(u_0, v_0, \theta_0) \in H^2(\R^3)^3$, with $\dv u_0 = 0$.  Let
      $R>0$ be such that $\|u_0\|_{ H^2}+\|v_0\|_{H^2}+ \|\theta_0\|_{ H^2} \le R$.
      Then, there exists a sufficiently small constant $c_0 =c_0(R) > 0$ such that, if
    \begin{equation} \label{constant-c0}
      \|u_0\|_{\dot H^{\frac{1}{2}}} + \|v_0\|_{\dot H^{\frac{1}{2}}} +
      \|\theta_0\|_{\dot H^{\frac{1}{2}}}\leq c_0 ,
    \end{equation}
    the system~\eqref{TCM-gen} admits a unique global-in-time strong
    solution $(u, v, \theta)$, satisfying
    \begin{equation*}
      u,v,\theta \in L^\infty([0, T); H^2(\R^3))\cap
      L^2([0, T); H^3(\R^3)) \ \mbox{ for every } \ T>0.
    \end{equation*}
  \end{thm}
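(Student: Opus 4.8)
The plan is to combine the local theory of Lemma~\ref{bound-H2} with the blow-up criterion of Theorem~\ref{th:bu criterion}. By Lemma~\ref{bound-H2} we obtain a maximal existence time $T^{\ast}\in(0,+\infty]$ and a unique strong solution $(u,v,\theta)$ with $u,v,\theta\in C([0,T);H^{2}(\R^{3}))\cap L^{2}([0,T);H^{3}(\R^{3}))$ for each $T<T^{\ast}$; so it suffices to prove $T^{\ast}=+\infty$. I would argue by contradiction: if $T^{\ast}<+\infty$ then $\limsup_{T\uparrow T^{\ast}}\|\Delta(u,v,\theta)(T)\|=+\infty$ by maximality, and Theorem~\ref{th:bu criterion} forces $\Pi(T^{\ast})=+\infty$. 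Since $\dot H^{3/2}(\R^{3})\hookrightarrow\mathrm{BMO}$ and, because $\alpha+1,\beta+1\in[\tfrac72,5)$, one has $\|u\|_{\alpha+1}+\|v\|_{\beta+1}\le C\big(\|(u,v)\|_{\dot H^{1/2}}+\|(u,v)\|_{\dot H^{1}}\big)$ by Sobolev embedding and interpolation, the contradiction will follow (via Remark~\ref{rmk-Linfty-H3/2}) once I establish an a priori bound, uniform on $[0,T^{\ast})$, for $\|\Lambda^{1/2}(u,v,\theta)(t)\|$, $\|\Lambda^{1}(u,v,\theta)(t)\|$ and $\|\Lambda^{3/2}(u,v,\theta)(t)\|$.

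\noindent\textbf{Step 1 (low-order norms, i.e.\ Corollary~\ref{cor:monotone}).}
I would first run the $\dot H^{1/2}$ and $\dot H^{1}$ energy estimates simultaneously: apply $\Lambda^{1/2}$ (resp.\ $\Lambda^{1}$) to the three equations of \eqref{TCM-gen}, pair in $L^{2}$ with $\Lambda^{1/2}(u,v,\theta)$ (resp.\ $\Lambda^{1}(u,v,\theta)$), and sum. The pressure drops out since $\dv u=0$; the $v$--$\theta$ coupling cancels because $\int\Lambda^{s}\nabla\theta\cdot\Lambda^{s}v+\int\Lambda^{s}\dv v\,\Lambda^{s}\theta=0$; the damping terms, via the fractional Leibniz-type estimates of \cite{Chae, Chae-Lee}, yield nonnegative contributions; and the convective and quadratic terms get estimated with \eqref{KP}, \eqref{KPV}, Gagliardo--Nirenberg and Young's inequality. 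The outcome I expect is, schematically,
\[
\frac{d}{dt}\,\|\Lambda^{1/2}(u,v,\theta)\|^{2}+c\,\|\Lambda^{3/2}(u,v,\theta)\|^{2}\le C\,\|\Lambda^{1/2}(u,v,\theta)\|\,\|\Lambda^{3/2}(u,v,\theta)\|^{2},
\]
(which reflects the $\dot H^{1/2}$-criticality in $3$D) and a companion $\dot H^{1}$-inequality. A continuation argument exploiting the smallness \eqref{constant-c0} should then show that, as long as $\|\Lambda^{1/2}(u,v,\theta)(t)\|$ stays below the threshold fixed by $c_{0}=c_{0}(R)$, the left-hand dissipation dominates, so that $\|\Lambda^{1/2}(u,v,\theta)(t)\|$ never exceeds $c_{0}$ on $[0,T^{\ast})$; inserting this into the $\dot H^{1}$-inequality then gives $\|\nabla(u,v,\theta)(t)\|\le\|\nabla(u_{0},v_{0},\theta_{0})\|\le R$ on $[0,T^{\ast})$, together with $\int_{0}^{T^{\ast}}\big(\|\Lambda^{3/2}(u,v,\theta)\|^{2}+\|\Delta(u,v,\theta)\|^{2}\big)\,dt<+\infty$. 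This is exactly the content of Corollary~\ref{cor:monotone}.

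\noindent\textbf{Step 2 ($\dot H^{3/2}$ estimate).}
Next I would apply $\Lambda^{3/2}$ to the equations and pair with $\Lambda^{3/2}(u,v,\theta)$. The pressure and the top-order $v$--$\theta$ coupling cancel as before; the convective terms I would write as commutators, $\int\Lambda^{3/2}(u\cdot\nabla f)\,\Lambda^{3/2}f=\int[\Lambda^{3/2},u\cdot\nabla]f\,\Lambda^{3/2}f$, and control by \eqref{KPV}; $\dv(v\otimes v)$ and $(v\cdot\nabla)u$ by \eqref{KP} and the Kozono-type inequalities \eqref{Kozono-BMO}--\eqref{BMO}; and $\Lambda^{3/2}$ of the damping terms $|u|^{\alpha-1}u$, $|v|^{\beta-1}v$ by fractional-derivative estimates valid for exponents $\ge5/2$ --- this is where the restriction $5/2\le\alpha,\beta$ is used. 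Interpolating the resulting norms between $\dot H^{1/2}$, $\dot H^{1}$ (bounded by Step~1) and the dissipation norm $\dot H^{5/2}$, and absorbing, I expect to reach
\[
\frac{d}{dt}\,\|\Lambda^{3/2}(u,v,\theta)\|^{2}+c\,\|\Lambda^{5/2}(u,v,\theta)\|^{2}\le C\big(1+\|\Lambda^{3/2}(u,v,\theta)\|^{2}\big)\,\Phi(t),
\]
with $\Phi\in L^{1}(0,T^{\ast})$ by Step~1. Gr\"onwall's lemma then yields $\sup_{[0,T^{\ast})}\|\Lambda^{3/2}(u,v,\theta)(t)\|<+\infty$ and $\int_{0}^{T^{\ast}}\|\Lambda^{5/2}(u,v,\theta)\|^{2}\,dt<+\infty$. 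Together with $\dot H^{3/2}\hookrightarrow\mathrm{BMO}$ and the interpolation bounds for $\|u\|_{\alpha+1}$, $\|v\|_{\beta+1}$, this makes $\Pi(T^{\ast})<+\infty$, the desired contradiction, so $T^{\ast}=+\infty$. Finally, for any $T>0$, the stated regularity $u,v,\theta\in L^{\infty}([0,T);H^{2})\cap L^{2}([0,T);H^{3})$ follows from the maximal-solution structure of Lemma~\ref{bound-H2} together with the a priori bounds above, and uniqueness is inherited from Lemma~\ref{bound-H2}.

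\noindent\textbf{Where the difficulty lies.}
I expect the main obstacle to be Step~2: one has to control $\Lambda^{3/2}$ --- a super-critical number of derivatives compared with the natural energy level --- of the low-regularity damping nonlinearities $|u|^{\alpha-1}u$ and $|v|^{\beta-1}v$ for $5/2\le\alpha,\beta<4$, where $s\mapsto|s|^{\gamma-1}s$ is not $C^{2}$, so no ordinary Leibniz rule applies and one must lean on the fractional estimates in the spirit of \cite{Chae, Chae-Lee, Zhao}; at the same time one must keep the temperature $\theta$ --- which carries no damping, only the diffusion $\mu\Delta\theta$ --- under control through the $v$--$\theta$ cancellation. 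Step~1's continuation argument using the $\dot H^{1/2}$-smallness is precisely what renders the otherwise critical quadratic interactions absorbable; without it the $\dot H^{1/2}$ estimate does not close.
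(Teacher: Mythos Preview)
Your overall architecture matches the paper's: local existence plus blow-up criterion, then a coupled $\dot H^{1/2}$--$\dot H^1$ continuation argument (Corollary~\ref{cor:monotone}), then a uniform $\dot H^{3/2}$ bound to feed Remark~\ref{rmk-Linfty-H3/2}. However, one claim in Step~1 is wrong and hides precisely where the real work lies.

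You assert that at the $\dot H^{1/2}$ level ``the damping terms \dots\ yield nonnegative contributions.'' This is false: unlike the $\dot H^1$ pairing, where integration by parts produces the explicit good terms $\sigma_1\||u|^{(\alpha-1)/2}\nabla u\|^2$, there is no sign for $-\sigma_1\int \Lambda^{1/2}(|u|^{\alpha-1}u)\cdot\Lambda^{1/2}u\,dx$. The paper devotes most of its effort in Step~1 to exactly this term (called $K_4$). For $3<\alpha<4$ it is bounded by $C\|\Lambda^{1/2}u\|^{5-\alpha}\|\nabla u\|^{2(\alpha-3)}\|\Lambda^{3/2}u\|^2$, which carries a factor $\|\nabla u\|^{2(\alpha-3)}$ and is the \emph{sole} reason $c_0$ must depend on $R$. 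For $5/2\le\alpha\le 3$ one first extracts a good piece $-\sigma_1\||u|^{(\alpha-1)/2}\Lambda^{1/2}u\|^2$ and then estimates the commutator remainder via \eqref{KPV}, obtaining $C\|\Lambda^{1/2}u\|^{3\alpha-7}\|\Lambda^{3/2}u\|^2$. The lower threshold $\alpha\ge 5/2$ is needed \emph{here} (to keep $3/(4-\alpha)\ge 2$ and the exponent $3\alpha-7\ge 1/2$), not in Step~2 as you suggest. So your schematic $\dot H^{1/2}$ inequality is too optimistic; the correct version is
\[
\frac{d}{dt}\|\Lambda^{1/2}(u,v,\theta)\|^2 + \big(\ell - C\|\Lambda^{1/2}(u,v,\theta)\| - C\,\mathcal{R}(u,\alpha) - C\,\tilde{\mathcal{R}}(v,\beta)\big)\|\Lambda^{3/2}(u,v,\theta)\|^2\le 0,
\]
with $\mathcal{R},\tilde{\mathcal{R}}$ possibly involving $\|\nabla u\|,\|\nabla v\|$. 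This forces the genuine coupling with the $\dot H^1$ inequality and explains the $R$-dependence of $c_0$; you cite that dependence but your stated mechanism would not produce it.

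In Step~2 the paper does not close via Gr\"onwall against an $L^1$ weight. The $\dot H^{3/2}$ estimate of the damping (in the range $5/2\le\alpha\le 3$) leaves a residual term $\varepsilon\||u|^{(\alpha-1)/2}\nabla u\|^2$; this is absorbed by \emph{adding} the full $\dot H^1$ energy inequality \eqref{e:H1}, whose left-hand side carries precisely $\sigma_1\||u|^{(\alpha-1)/2}\nabla u\|^2$. The sum then satisfies $\tfrac{d}{dt}\big(\|\nabla(u,v,\theta)\|^2+\|\Lambda^{3/2}(u,v,\theta)\|^2\big)\le C_0$, giving a linear-in-$t$ bound on $[0,T^\ast)$ and hence \eqref{e:holds}. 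Your Gr\"onwall scheme could also be pushed through (since $\||u|^{(\alpha-1)/2}\nabla u\|^2\in L^1(0,T^\ast)$ follows from integrating \eqref{e:H1}), but the addition trick is what the paper actually does, and it is cleaner.
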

  
  Note that the necessity to bound
  $\|u_0\|_{H^2}+\|v_0\|_{H^2}+\|\theta_0\|_{H^2}\le R$ is an
  inheritance of Corollary \ref{cor:monotone}. Hence, as in Corollary
  \ref{cor:monotone}, we actually do not need this bound in the case
 {$5/2 \le \alpha, \beta \le 3$}, when the conclusion of Theorem
  \ref{main} follows with $c_0$ not depending on $R$. In the other
  cases, we cannot remove the assumption, {since
    $c_0(R) =o(1)$ as $R \to +\infty$.}
  
  \section{Local well-posedness and blow-up criterion}\label{sec-stime}
We begin with the local existence and uniqueness of strong solutions.

  \begin{lem}[Local well-posedness] \label{bound-H2}
    Under the assumptions of Theorem \ref{main}, there exists a
    positive time $\hat T>0$ and a unique strong solution
    $(u, v, \theta)(t)$ on $[0, \hat T)$ to the system~\eqref{TCM-gen},
    satisfying
    \begin{equation*}
      (u, v, \theta) \in L^\infty([0, T]; H^2(\R^3))\cap L^2([0, T]; H^3(\R^3))
      \,\, \textrm{ for every }\,\, 0< T <\hat T.
    \end{equation*}
  \end{lem}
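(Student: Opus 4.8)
The plan is to construct the local strong solution by a standard Galerkin (or Friedrichs mollifier) approximation scheme, derive uniform $H^2$-estimates on a time interval $[0,\hat T)$ whose length depends only on $\|(u_0,v_0,\theta_0)\|_{H^2}$, and then pass to the limit; uniqueness follows from an energy estimate on the difference of two solutions. First I would set up the approximate system by projecting \eqref{TCM-gen} onto the first $N$ Fourier modes (equivalently, truncating with $J_N = \mathcal{F}^{-1}\mathbf{1}_{|\xi|\le N}\mathcal{F}$), which turns it into a locally Lipschitz ODE system in the finite-dimensional space; the Leray projector $\mathbb{P}$ handles the pressure $\pi$ and the divergence-free constraint on $u$. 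The damping nonlinearities $|u|^{\alpha-1}u$ and $|v|^{\beta-1}v$ are continuous (indeed locally Lipschitz on bounded sets for $\alpha,\beta\ge 1$), so Picard–Lindelöf gives local-in-time approximate solutions $(u_N,v_N,\theta_N)$.

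Next I would establish the key a priori bound. Testing the first three equations (in their mollified form) against $\Delta^2 u_N$, $\Delta^2 v_N$, $\Delta^2\theta_N$ respectively and summing, the viscous terms produce $\nu\|\nabla\Delta u_N\|^2 + \eta\|\nabla\Delta v_N\|^2 + \mu\|\nabla\Delta\theta_N\|^2$ on the good side. The transport and coupling terms $(u\cdot\nabla)u$, $\dv(v\otimes v)$, $(v\cdot\nabla)u$, $\nabla\theta$, $\dv v$ are controlled by Kato–Ponce \eqref{KP} and Gagliardo–Nirenberg interpolation together with Young's inequality, absorbing top-order derivatives into the dissipation; the crucial point is that $\|(u,v,\theta)\|_{L^\infty}\lesssim \|(u,v,\theta)\|_{H^2}$ in $\R^3$ by Sobolev embedding, so every cubic term closes. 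For the damping terms one writes $\int |u_N|^{\alpha-1}u_N\cdot\Delta^2 u_N = \int \Delta(|u_N|^{\alpha-1}u_N)\cdot\Delta u_N$ (after the mollifier is dealt with) and uses that $\nabla(|u|^{\alpha-1}u)\sim |u|^{\alpha-1}\nabla u$, $\nabla^2(|u|^{\alpha-1}u)\sim |u|^{\alpha-1}\nabla^2 u + |u|^{\alpha-2}|\nabla u|^2$; since $5/2\le\alpha<4$ these powers are subcritical relative to $H^2\hookrightarrow L^6\cap L^\infty$, so they too are bounded by a polynomial in $\|u_N\|_{H^2}$ (this is where the restriction $\alpha,\beta<4$ would matter if one wanted the damping to be a perturbation, although for local existence even milder bounds suffice). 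The outcome is a differential inequality $\frac{d}{dt}\|(u_N,v_N,\theta_N)\|_{H^2}^2 + (\text{dissipation}) \le C\,P\big(\|(u_N,v_N,\theta_N)\|_{H^2}^2\big)$ for a polynomial $P$, giving a uniform bound on $[0,\hat T)$ with $\hat T$ depending only on $R$, plus a uniform bound on $\int_0^{\hat T}\|(u_N,v_N,\theta_N)\|_{H^3}^2\,dt$.

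Then I would extract weak-$*$ limits in $L^\infty([0,\hat T];H^2)$ and weak limits in $L^2([0,\hat T];H^3)$, use the Aubin–Lions lemma (with $\partial_t(u_N,v_N,\theta_N)$ bounded in, say, $L^2([0,\hat T];L^2)$, which follows from the equations) to upgrade to strong convergence in $L^2([0,\hat T];H^2_{\mathrm{loc}})$, and pass to the limit in each term — the only mildly delicate passage being the nonlinear damping term, handled by a.e. convergence plus the uniform bound and Vitali, and the quadratic terms, handled by the strong convergence. Continuity in time into $H^2$ (so that the initial data is attained) comes from the Lions–Magenes interpolation lemma, $(u,v,\theta)\in L^\infty H^2\cap L^2 H^3$ with $\partial_t\in L^2 H^1$ implies $C([0,\hat T];H^2)$ after modification on a null set. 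Finally, for uniqueness, given two solutions with the same data I would write the system for the difference $(\delta u,\delta v,\delta\theta)$, test in $L^2$, and bound all terms using the $L^\infty([0,\hat T];H^2)$ regularity of both solutions; the damping difference contributes $\int(|u_1|^{\alpha-1}u_1 - |u_2|^{\alpha-1}u_2)\cdot\delta u \ge 0$ by monotonicity of $w\mapsto|w|^{\alpha-1}w$, so it can simply be discarded, and Grönwall closes the argument. The main obstacle I anticipate is the careful treatment of the non-smooth damping nonlinearity at the level of $H^2$-estimates — in particular justifying the integration by parts and the chain-rule-type bounds $|\nabla^2(|u|^{\alpha-1}u)|\lesssim |u|^{\alpha-1}|\nabla^2 u| + |u|^{\alpha-2}|\nabla u|^2$ rigorously for non-integer $\alpha$, which is cleanest to do first on the mollified solutions (where everything is smooth) and only then pass to the limit.
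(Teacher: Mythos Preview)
Your proposal is correct and follows essentially the same strategy as the paper: derive an $H^2$ a priori differential inequality of the form $X'\le C X^p$ with $X=1+\|\Delta(u,v,\theta)\|^2$, which yields a uniform local time $\hat T$ depending only on the data, and then invoke a standard approximation--compactness argument (which the paper simply declares, while you spell out Galerkin, Aubin--Lions, etc.).

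One difference is worth pointing out. For the damping contribution you propose to keep $\int\Delta(|u|^{\alpha-1}u)\cdot\Delta u\,dx$ and bound $\nabla^2(|u|^{\alpha-1}u)$ via the pointwise chain-rule estimate $|u|^{\alpha-1}|\nabla^2 u|+|u|^{\alpha-2}|\nabla u|^2$; you correctly identify the justification of this for non-integer $\alpha$ as the main technical nuisance. The paper sidesteps this entirely by integrating by parts once more,
\[
\Big|\int_{\R^3}\Delta(|u|^{\alpha-1}u)\cdot\Delta u\,dx\Big|\le \|\nabla(|u|^{\alpha-1}u)\|\,\|\nabla\Delta u\|\le C\||u|^{\alpha-1}\nabla u\|\,\|\nabla\Delta u\|,
\]
and then controls $\||u|^{\alpha-1}\nabla u\|$ by H\"older and Gagliardo--Nirenberg (using only $\|u\|_{3(\alpha-1)}\le C\|u\|^{1-\kappa}\|\Delta u\|^\kappa$), absorbing the extra $\|\nabla\Delta u\|$ into the dissipation via Young. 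This needs only the \emph{first} derivative of $|u|^{\alpha-1}u$, which is unproblematic for $\alpha\ge 2$, so your anticipated obstacle disappears. Your route works too (on mollified approximants, as you say), but the paper's extra integration by parts is the cleaner move here.
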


  \begin{proof}
  We proceed formally, in order to obtain a priori estimates. A
  standard approximation procedure gives then the desired result (see
  e.g. \cite{Chae}, see also \cite{Ma-Jiang-Wan}).
  
  Let us start by taking the $L^2$-inner product of
  $\eqref{TCM-gen}_1$--$\eqref{TCM-gen}_3$,
  respectively, with $u$, $v$ and $\theta$, and then
  by adding the resulting equations up. This gives
  \begin{equation*} 
    \frac{d}{dt}\|(u,v,\theta)(t)\|^2
    +\nu\,\|\nabla u\|^2+\eta\, \|\nabla v\|^2 +\mu\,\|\nabla
    \theta\|^2 + \sigma_1\, \|u\|_{\alpha+1}^{\alpha+1}+\sigma_2 \,
    \|v\|_{\beta+1}^{\beta+1} =0,
  \end{equation*}
  from which we get, after integrating in time variable in the interval
  $(0,T)$,
  \begin{equation} \label{bound-L2-integral}
    \|(u,v,\theta)(T)\|^2 + \int_{0}^{T} \left( \|\nabla (u,v,\theta)(t)\|^2
      + \|u(t)\|_{\alpha+1}^{\alpha+1}+ \|v(t)\|_{\beta+1}^{\beta+1}
    \right) \, dt \le C\, \|(u_0,v_0,\theta_0)\|^2,
  \end{equation}
  where we recall that
$\|\nabla (u, v, \theta)(t)\|^2 =\|\nabla u(t)\|^2 +\|\nabla
v(t)\|^2+\|\nabla \theta(t)\|^2$.
As a consequence of the local existence, and in particular thanks to 
\eqref{bound-L2-integral}, we will also have that $u\in L^{\alpha+1}([0,T); L^{\alpha+1}(\mathbb{R}^3))$ and
$ v\in L^{\beta+1}([0,T); L^{\beta+1}(\mathbb{R}^3))$, for every $0 < T<\hat T$. 
  
  Further, applying $\Delta$ to $\eqref{TCM-gen}_1$--$\eqref{TCM-gen}_3$ and multiplying
  the resulting equations in $L^2$ by $\Delta u$, $\Delta v$ and
  $\Delta \theta$, respectively, lead to
  \begin{equation} \label{Hs-stima-diff-iniziale}
    \begin{aligned}
      \frac{1}{2} \frac{d}{dt} \|\Delta & (u, v, \theta) (t) \|^2
      + \nu\|\nabla \Delta u\|^2 + \eta \|\nabla \Delta v\|^2 + \mu
      \|\nabla \Delta \theta\|^2
      \\
      = & -\int_{\R^3}\Delta \big( (u\cdot \nabla) u\big) \cdot \Delta
      u \dx - \int_{\R^3}\Delta \big( (u\cdot \nabla) v\big) \cdot
      \Delta v \dx - \int_{\R^3}\Delta\big( (u \cdot \nabla)
      \theta\big) \cdot \Delta\theta \dx
      \\
      &-\int_{\R^3}\Delta\big( (v\cdot \nabla) v\big) \cdot \Delta u
      \dx - \int_{\R^3}\Delta\big( v \,\dv v\big)\cdot \Delta u \dx -
      \int_{\R^3}\Delta\big( (v\cdot \nabla) u\big) \cdot \Delta v \dx
      \\
      & - \sigma_1\int_{\R^3}\Delta(|u|^{\alpha-1}u)\cdot \Delta u
      \dx- \sigma_2\int_{\R^3}\Delta(|v|^{\beta-1}v)\cdot \Delta v \dx
      \doteq \sum_{i=1}^8 I_i,
    \end{aligned}
  \end{equation}
where we used the identity
  \begin{equation*} 
    \int_{\R^3}\Delta \nabla \theta\cdot \Delta v dx
    + \int_{\R^3}\Delta(\dv v)\cdot \Delta \theta dx =0.
  \end{equation*}
  Also, using the fact that $\dv u =0$ (see $\eqref{TCM-gen}_4$), we have, by means of the (continuous) embedding
  $H^1(\mathbb{R}^3)\hookrightarrow L^6(\mathbb{R}^3)$ and
  $\|\nabla w\|\le C\|w\|^{\frac14}\,\|\Delta w\|^{\frac34}$ as in
  \cite{Ma-Jiang-Wan},
  \begin{equation*}
  \label{e:I-no-dampings}
    \begin{aligned}
      |I_i| &= \Big|\int_{\mathbb{R}^3} \Delta \big( (u\cdot \nabla)
      w\big) \cdot \Delta w \dx \Big| \leq C \|\nabla \Delta
      u\|\,\|\Delta w\|^2, \,\,\,\, &i=1,2,3,
      \\
      |I_i|&\leq C\big(\|\nabla \Delta u\| + \|\nabla \Delta
      v\|\big)\,\|\Delta (u,v,\theta)\|^2, \,\,\,\, &i=4,5,6,
    \end{aligned}
  \end{equation*}
   where   $w$ plays the role of $u$, $v$ or $\theta$.
    
   The contributions of the terms involving the damping terms are instead dealt with as follows.
   First, we consider $I_7$. From Holder's and Young's inequalities, the embedding
   $H^1(\mathbb{R}^3)\hookrightarrow L^6(\mathbb{R}^3)$ and Gagliardo--Nirenberg\rq{}s inequality
  \begin{equation} \label{e:loc-est-alpha}
    \|u\|_{3(\alpha-1)}\le C\, \|u\|^{1-\kappa}\, \|\Delta u\|^{\kappa}\le C\,  \|\Delta u\|^{\kappa} \
    \mbox{ with } \ \kappa(\alpha)=\frac{3\alpha-5}{4(\alpha-1)} \in (0,1),
  \end{equation}
  we get
    \begin{align*}
      \Big| \int_{\R^3}\Delta(|u|^{\alpha-1}u)\cdot \Delta u \dx\Big|
      & \le \|\nabla \big(|u|^{\alpha-1}u\big)\|\, \|\nabla \Delta u\|
      \\
      &\le C\,\||u|^{\alpha-1}\nabla u\|\, \|\nabla \Delta u\|
      \\
      &\le C\, \||u|^{\alpha-1}\|_3^2\, \|\nabla u\|_6^2 + \varepsilon
      \|\nabla \Delta u\|^2
      \\
      &\le C\, \|u\|_{3(\alpha-1)}^{2(\alpha-1)}\, \|\nabla u\|_6^2 +
      \varepsilon \|\nabla \Delta u\|^2
      \\
      &\le C\, \|\Delta u\|^\frac{3\alpha-5}{4(\alpha-1)}\, \|\Delta
      u\|^2+ \varepsilon \|\nabla \Delta u\|^2
      \\
      &\le C(1+\|\Delta u\|^3) + \varepsilon \|\nabla \Delta u\|^2.
          \end{align*}
          Similarly, for $I_8$ we obtain
          \begin{equation*}
      \Big| \int_{\R^3}\Delta(|v|^{\beta-1}v)\cdot \Delta v\dx\Big|  
     \le C(1+\|\Delta v\|^3) + \varepsilon \|\nabla \Delta v\|^2.
     \end{equation*}

  Plugging the above estimates in \eqref{Hs-stima-diff-iniziale}, and
  set $\ell \doteq \min \{\nu, \eta, \mu\}$, we obtain
  \begin{equation} \label{e:local0}
    \frac{1}{2} \frac{d}{dt} \|\Delta  (u, v, \theta) (t) \|^2
    + \ell\|\nabla \Delta(u, v, \theta)(t)\|^2
    \leq C\,
    \|\nabla \Delta (u,v)\|\,\big(1 + \|\Delta (u, v, \theta) (t)
    \|^3 \big).
  \end{equation}
 Then, an application of Young's inequality to the right-hand
  side of \eqref{e:local0}, and straightforward manipulations lead to
  \begin{equation*} \label{e:local}
    \frac{1}{2} \frac{d}{dt}\big(1+ \|\Delta  (u, v, \theta) (t) \|^2\big)
    + \big(\ell-\varepsilon\big)\|\nabla \Delta(u, v, \theta)(t)\|^2
    \leq C\,\big(1 + \|\Delta (u, v, \theta) (t)
    \|^2 \big)^3.
  \end{equation*}
    
  Now, set $X(t) = 1 + \|\Delta (u, v, \theta) (t) \|^2$ and
  \begin{equation*}
    \hat T = \frac{3}{8\,C_0\,X^2(0)}.
  \end{equation*}
  As a consequence of
   
    \begin{equation*}
      \frac{d}{dt} X\leq C_0 X^3\,\,\, \textrm{ and so } \,\,\,
      X(t)\leq \frac{X(0)}{\sqrt{1-2\,C_0\,X^2(0)\, t}},
    \end{equation*}
    we obtain that $X(t)\leq 2 X(0)$, for $t \in [0,T]$, and for
    every $0<T<\hat T$.

 {Uniqueness and continuous dependence are even more standard and can be achieved, for instance, as done
  in \cite{Yuan-Chen} (see also \cite{Yuan-Zhang})}.
\end{proof}

\begin{remark}
    The same conclusion of Lemma~\ref{bound-H2} holds true if $\alpha,
\beta > 5/3$, since \eqref{e:loc-est-alpha} (and the corresponding
estimate involving $\beta$) still applies.
  \end{remark}

  \subsection{Proof of Theorem \ref{th:bu criterion}}
  In this section, we derive Theorem \ref{th:bu criterion}. In
order to do so, let us now turn back to the differential relation
\eqref{Hs-stima-diff-iniziale}, and recast our estimates in order to
make explicit the just mentioned blow-up criterion.
  
Recalling the identities
  \begin{equation*}
    \int_{\R^3}\Delta \big( (u\cdot \nabla) u\big) \cdot
    \Delta u \dx =
    \int_{\R^3} \nabla^2 \big((u\cdot\nabla) u\big) \cdot \nabla^2 u\dx
  \end{equation*}
  and
  \begin{equation*}
    \int_{\R^3} (u\cdot\nabla)\nabla^2 u \cdot \nabla^2 u\dx =0,
  \end{equation*}
  using H\"older's inequality and exploiting \eqref{Kozono-BMO}, we
  reach
  \begin{equation} \label{I1}
    \begin{aligned}
      |I_1| &= \left| \int_{\R^3} \big[\nabla^2 (u\cdot\nabla)u \cdot\nabla^2 u -
        (u\cdot\nabla)\nabla^2 u \cdot \nabla^2 u\big]\dx \right|
      \\
      &\leq \hspace{-0.8 cm}
      \sum_{{\tiny\left. \begin{array}{l} \qquad|\lambda|\geq 1\\
                           \qquad |\lambda|+|\gamma|=2
                         \end{array} \right.}}\!\!\!
                   C\|(\nabla^\lambda u\cdot \nabla ) \nabla^\beta u\| \|\nabla^2 u\|
      \leq  \hspace{-0.8 cm}
      \sum_{{\tiny\left. \begin{array}{l} \qquad|\lambda|\geq 1, \,\, |\bar \gamma| \geq 1\\
                           \qquad |\lambda|+|\bar\gamma|=3
                         \end{array}\right.}}\!\!\!
                   C\|\D^\lambda u\, \D^{\bar \gamma} u\|\|\Delta u\|
  \\
  &\leq C \|u\|_{{}_{\textrm{BMO}}}\|\nabla\Delta u\|\|\Delta u\|
  \\
  &\leq C \|u\|_{{}_{\textrm{BMO}}}^2 \|\Delta u\|^2 +\varepsilon
  \|\nabla\Delta u\|^2,
\end{aligned}
\end{equation}
where 
$\displaystyle  \nabla^2 \doteq \D_{x_1}^{\lambda_1}\D_{x_2}^{\lambda_2}\D_{ x_3}^{\lambda_3}$,  with 
$\lambda_1+\lambda_2+ \lambda_3=2$.
      
Similarly, we get
\begin{equation*}
  \begin{aligned}
    |I_2| &\leq \left| \int_{\R^3}\left[\Delta \big( (u \cdot \nabla)
        v\big) - (u \cdot \nabla)\Delta v \right] \cdot \Delta v \dx
    \right|
    \\
    & \leq C\big( \|u\|_{{}_{\textrm{BMO}}}\|\nabla\Delta v\|
    +\|v\|_{{}_{\textrm{BMO}}}\|\nabla \Delta u\|\big)\|\Delta v\|
    \\
    &\leq C\big(1 + \|u\|_{{}_{\textrm{BMO}}}^2+ \|v\|_{{}_{\textrm{BMO}}}^2 \big)
    \|\Delta v\|^2 + \varepsilon \big(\|\nabla\Delta
    u\|^2+\|\nabla\Delta v\|^2 \big),
  \end{aligned}
\end{equation*}
and also
\begin{equation*}
  \begin{aligned}
    |I_3| &\leq \left| \int_{\R^3}\left[\Delta \big( (u \cdot \nabla)
        \theta\big) - (u \cdot \nabla)\Delta \theta \right] \cdot
      \Delta \theta \dx \right|
    \\
    & \leq C\big( \|u\|_{{}_{\textrm{BMO}}}\|\nabla\Delta \theta\|
    +\|\theta\|_{{}_{\textrm{BMO}}}\|\nabla \Delta u\|\big)\|\Delta \theta\|
    \\
    &\leq C\big(1 + \|u\|_{{}_{\textrm{BMO}}}^2+ \|\theta\|_{{}_{\textrm{BMO}}}^2 \big)
    \|\Delta \theta\|^2 + \varepsilon \big(\|\nabla\Delta
    u\|^2+\|\nabla\Delta \theta\|^2 \big).
  \end{aligned}
\end{equation*}

  After recalling that
\begin{equation*}
  \int_{\R^3} \big[(v\cdot \nabla ) \nabla^2 v \cdot \nabla^2 u
  +  (v\cdot \nabla ) \nabla^2 u \cdot \nabla^2 v \big]\dx =
  -  \int_{\R^3}  \dv v\, \nabla^2 v \cdot \nabla^2 u \dx,
\end{equation*}
we can reason similarly as for $I_1$ in \eqref{I1}, thus obtaining
\begin{equation} \label{e:I4-I6}
  \begin{aligned}
    |I_4+I_6|& = \left|\int_{\mathbb{R}^3}\nabla^2\big(\big(v\cdot
      \nabla\big)v\big)\cdot \nabla^2 u + \int_{\mathbb{R}^3} \nabla^2
      \big(\big(v\cdot \nabla\big)u\big)\cdot \nabla^2 v \right|
    \\
    &\le \left|\int_{\R^3}\big[ \nabla^2 \big((v\cdot \nabla
      )v\big)\cdot \nabla^2 u - \big((v\cdot \nabla ) \nabla^2
      v\big)\cdot \nabla^2 u \big]\dx\right|
    \\
    & \quad+ \left|\int_{\R^3}\big[ \nabla^2 \big((v\cdot \nabla
      )u\big)\cdot \nabla^2 v - \big((v\cdot \nabla ) \nabla^2
      u\big)\cdot \nabla^2 v \big]\dx\right|
    \\
    & \quad+  \left|\int_{\R^3}  \dv v \,\nabla^2 v \cdot \nabla^2 u \dx\right|\\
    &\leq C \|v\|_{{}_{\textrm{BMO}}}\|\nabla\Delta v\|\|\Delta u\| +C
    \big(\|u\|_{{}_{\textrm{BMO}}}\|\nabla\Delta v\|
    + \|v\|_{{}_{\textrm{BMO}}} \| \nabla \Delta u\|\big)\|\Delta v\|\\
    & \quad +\|\dv v\, \nabla^2 v\|\, \| \nabla^2 u\|\\
    &\leq C\big(1 + \|u\|_{{}_{\textrm{BMO}}}^2+ \|v\|_{{}_{\textrm{BMO}}}^2
    \big)\big(\|\Delta u\|^2+\|\Delta v\|^2 \big) + \varepsilon
    \big(\|\nabla\Delta u\|^2+\|\nabla\Delta v\|^2 \big),
  \end{aligned}
\end{equation}
where for the last term we used again \eqref{Kozono-BMO}.

Considering $I_{5}$, we have
\begin{equation}
  \begin{aligned}
    \label{I5}
    |I_{5}|&\leq \|\nabla (v\dv v )\|\|\nabla \Delta u\|
    \\
    & \leq \big(\|\dv v \nabla v \| +\| v \nabla \dv v \| \big)
    \|\nabla \Delta u\|.
  \end{aligned}
\end{equation}

Thanks to \eqref{e:kozono2002} and Gagliardo--Nirenberg's inequality
$\|\Delta v\| \leq C \|v\|^{\frac13} \|\nabla\Delta v\|^{\frac23} \leq
C\|\nabla\Delta v\|^{\frac23}$, we get
\begin{equation*}
  \begin{aligned}
    \|v \, \nabla\dv v\| \|\nabla \Delta u\| &\le
    C\|v\|_{\infty}\|\Delta v\| \|\nabla \Delta u\|
    \\
    &\le C\Big( 1+\|v\|_{_{\textrm{BMO}}}\, \ln\big(e+\| \Delta v\|\big)\Big)
    \|\Delta v\|\, \|\nabla \Delta u\|
    \\
    &\le C \Big(1+ \|v\|_{_{\textrm{BMO}}}\,\big(1+\|\Delta
    v\|^{\frac13}\big)\Big)\|\Delta v\|\, \|\nabla \Delta u\|
    \\
    &\le C\big(1+\|v\|_{_{\textrm{BMO}}}^2\big)\big(1+\|\Delta v\|^{\frac23}\big)
    \| \Delta v\|^2+ \varepsilon\, \|\nabla \Delta u\|^2
    \\
    &\le C\big(1+\|v\|_{_{\textrm{BMO}}}^2\big)\big(1+\|\Delta v\|^{\frac23}\big)
    \| \nabla \Delta v\|^{\frac43}
    + \varepsilon \|\nabla \Delta u\|^2 \\
    & \le C\big(1+\|v\|_{_{\textrm{BMO}}}^{6}\big)\big(1+\|\Delta v\|^{2}\big)
    + \, \varepsilon (\|\nabla \Delta u\|^2+\|\nabla \Delta v\|^2).
  \end{aligned}
\end{equation*}

Regarding the former addendum in the right-hand side of \eqref{I5},
by virtue of \eqref{Kozono-BMO}, we get
\begin{equation*}
  \begin{aligned}
    {\|\dv v \nabla v\| \, \|\nabla \Delta u \|}&\le C\,
    \|v\|_{_{\textrm{BMO}}}\,\|\Delta v\|\, \|\nabla \Delta u\|
    \\
    &\le C\, \|v\|^2_{_{\textrm{BMO}}}\, \|\Delta v\|^2 + \varepsilon \|\nabla \Delta     u\|^2.
  \end{aligned}
\end{equation*}
Therefore, we get
\begin{equation}
\label{e:I5}
  |I_{5}|  \le C\big(1 + \|v\|_{_{\textrm{BMO}}}^6 \big)
  \big(1+ \|\Delta v\|^2 \big)  \, + \, \varepsilon (\|\nabla \Delta u\|^2+\|\nabla \Delta v\|^2).
\end{equation}

We now estimate $I_7$, since calculations for $I_8$ are analogous. We
have
\begin{equation} \label{I7}
  \begin{aligned}
    |I_7|& \leq \left|\int_{\R^3}\nabla (|u|^{\alpha-1}u)\cdot \nabla
      \Delta u dx\right|
    \\
    & \leq C\|\nabla (|u|^{\alpha-1}u)\|^2 + \varepsilon\| \nabla
    \Delta u \|^2
    \\
    & \leq C \| |u|^{\alpha-1}\nabla u\|^2 + \varepsilon\| \nabla
    \Delta u \|^2
    \\
    &\leq C \| |u|^{\alpha-1}\|_p^2\|\nabla u\|_{\frac{2p}{p-2}}^2 +
    \varepsilon\| \nabla \Delta u \|^2
    \\
    & = C \| |u|^2\|_{\frac{p(\alpha-1)}{2}}^{\alpha-1} \|\nabla
    u\|_{\frac{2p}{p-2}}^2 + \varepsilon\| \nabla \Delta u \|^2
    \\
    &\leq C\|u\|_{{}_{\textrm{BMO}}}^{\alpha
      -1}\|u\|_{\frac{p(\alpha-1)}{2}}^{\alpha-1}\|\nabla
    u\|_{\frac{2p}{p-2}}^2 + \varepsilon\| \nabla \Delta u \|^2 \doteq
    J_{71} + \varepsilon\| \nabla \Delta u \|^2,
  \end{aligned}
\end{equation}
where, in the last step, we used \eqref{BMO}.  To conclude, we provide
suitable estimates for the term $I_{71}$ in \eqref{I7}.

Then selecting
\begin{equation*}
  p=\frac{2(\alpha+1)}{\alpha-1}> 2\,\,\quad  \textrm{ and }\,\,\quad \frac{2p}{p-2}=\alpha+1 >2, 
\end{equation*}
we have
\begin{equation} \label{J71}
  \begin{aligned}
    |I_{71}|& = \|u\|_{{}_{\textrm{BMO}}}^{\alpha
      -1}\|u\|_{\alpha+1}^{\alpha-1}\|\nabla u\|_{\alpha+1}^2
    \\
    &\leq C
    \|u\|_{{}_{\textrm{BMO}}}^{\alpha-1}\|u\|_{\alpha+1}^{\alpha-1}\big(\|u\|^{1-\delta}
    \|\Delta u\|^\delta \big)^2
    \\
    &\leq C\big( \|u\|_{{}_{\textrm{BMO}}}^{\frac{\alpha^2-1}{2}}
    +\|u\|_{\alpha+1}^{\alpha+1}\|\big) \|\Delta u\|^{2\delta}
    \\
    &\leq C\big(1+\|u\|_{{}_{\textrm{BMO}}}^{8} +
    \|u\|_{\alpha+1}^{\alpha+1}\Big)(1+\|\Delta u\|^2),
  \end{aligned}
\end{equation}
where $0<\delta<1$ is given by Gagliardo--Nirenberg\rq{}s inequality and its
value is $\frac{5\alpha-1}{4(\alpha+1)}$, and
$\frac{\alpha^2-1}{2} < 15/2 <8$.
    
Finally, using \eqref{J71}, we have that
\begin{equation*}
  \begin{aligned}
    |I_7| \leq C \big( 1 +\|u\|_{{}_{\textrm{BMO}}}^{8} +
    \|u\|^{\alpha+1}_{\alpha+1}\big) \big(1 +\|\Delta u\|^2 ) +
    \varepsilon\| \nabla \Delta u \|^2.
  \end{aligned}
\end{equation*}
    
Similarly, we deduce that
\begin{equation} \label{I8} |I_8| \leq C \big( 1 +\|v\|_{{}_{\textrm{BMO}}}^{8}
  + \|v\|^{\beta+1}_{\beta+1}\big) \big(1 +\|\Delta v\|^2 ) +
  \varepsilon\| \nabla \Delta v \|^2.
\end{equation}

Therefore, plugging \eqref{I1}--to--\eqref{I8} in \eqref{Hs-stima-diff-iniziale}, we get
\begin{equation} \label{e:I-dampings}
  \begin{aligned}
    \frac{d}{dt} \big(& 1 + \|\Delta (u, v, \theta) (t) \|^2\big) +
    \big(\min(\nu, \eta, \mu) - 8\varepsilon\big)\|\nabla \Delta( u,
    v, \theta)\|^2
    \\
    &\leq C \Big( 1 +\|u\|_{{}_{\textrm{BMO}}}^{8} +\|v\|_{{}_{\textrm{BMO}}}^{8} +
    \|\theta\|_{{}_{\textrm{BMO}}}^2 +
    \|u\|^{\alpha+1}_{\alpha+1}+\|v\|^{\beta+1}_{\beta+1} \Big)
    \big(1 + \|\Delta (u, v, \theta) (t) \|^2\big),
  \end{aligned}
\end{equation}
and we recall that
$\|\nabla \Delta( u, v, \theta)\|^2 = \|\nabla \Delta u\|^2 + \|\nabla
\Delta v\|^2 + \|\nabla \Delta
\theta\|^2$.

Hence, by using \eqref{e:I-dampings} and Gronwall's inequality, we reach
\begin{equation} \label{capital-pi}
  \begin{aligned}
    \sup_{0\leq t\leq T} \big( 1 + \|\Delta (u, v, \theta) (t)
    \|^2\big) &\leq \big( 1 + \|\Delta (u_0, v_0, \theta_0) \|^2\big)
    \\
    & \hspace{0.5 cm}\times \exp\underbrace{\int_0^T C \big(1
      +\|u\|_{{}_{\textrm{BMO}}}^{8} + \|v\|_{{}_{\textrm{BMO}}}^{8} +
      \|\theta\|_{{}_{\textrm{BMO}}}^2 +
      \|u\|^{\alpha+1}_{\alpha+1}+\|v\|^{\beta+1}_{\beta+1}
      \big)dt}_{= \Pi(T)},
  \end{aligned}
\end{equation}
where $\Pi(T)$ is defined in \eqref{e:Pi}.

As a consequence of \eqref{capital-pi} and of the fact
that, from \eqref{bound-L2-integral}, 
$u\in L^{\alpha+1}([0,T); L^{\alpha+1}(\mathbb{R}^3))$ and also
$ v\in L^{\beta+1}([0,T); L^{\beta+1}(\mathbb{R}^3))$, for any $T>0$, 
we  proved Theorem \ref{th:bu criterion}.

\section{Proof of Theorem \ref{main}}
\label{sec:main}

We begin with the next remark.

\begin{remark} \label{rmk-Linfty-H3/2}
  In order to prove Theorem~\ref{main}, we make use of Theorem
  \ref{th:bu criterion} by means of the control of
  \begin{equation} \label{e:holds} \sup_{0\leq t<
      T^\ast}\|(u,v,\theta)(t)\|_{\dot H^{\frac32}} < +\infty,
  \end{equation}
  where $T^\ast>0$ is the supposed first blow-up time. Indeed, since
  the continuous embedding
  $\dot H^{3/2}(\mathbb{R}^3)\hookrightarrow
  \mathrm{BMO}(\mathbb{R}^3)$ holds, if \eqref{e:holds} is satisfied,
  then
  \begin{equation*}
    \sup_{0\leq t <T^\ast}\Pi(t)< +\infty,
  \end{equation*}
  and hence $\Pi(T^\ast) <+\infty$, which in turn implies, thanks to
  Theorem~\ref{th:bu criterion}, that such a blow-up time $T^\ast>0$
  cannot exist. Hence, global existence in time follows.
\end{remark}

In other words, the proof of Theorem \ref{main} reduces to the proof
of \eqref{e:holds}, provided the initial data are small enough.

First, we prove that the functions
$t \mapsto \|(u,v,\theta)(t)\|_{\dot H^{\frac12}}$ and
$t \mapsto \|(u,v,\theta)(t)\|_{\dot H^1}$ are decreasing, if the
initial data is small (see Corollary \ref{cor:monotone}). Then, with
this latter information in hand, we provide the uniform boundedness of
$t\mapsto \|(u,v,\theta)(t)\|_{\dot H^{\frac32}}$ as in
\eqref{e:holds} (see \eqref{e:final}).

\subsection{Monotonicity of $\dot H^{1}$- and $\dot H^{\frac12}$-norms under small initial data}

The key result that we are going to prove is Corollary \ref{cor:monotone}.
To this, we need to put together the energy estimates in $\dot H^1$ and in $\dot H^{\frac12}$,
as follows (see \eqref{stima-u-v-H1-chiusa} and \eqref{e:key2} below).

 \smallskip
  
\noindent \textbf{$\dot{H}^1$-estimates.}
Multiplying $\eqref{TCM-gen}_1$ by $-\Delta u$, integrating by parts,
we obtain
\begin{equation} \label{u-eq}
  \begin{aligned}
    \frac{1}{2} \frac{d}{dt} \|\nabla u(t)\|^2 + & \nu\|\Delta
    u(t)\|^2
    -  \sigma_1\int_{\R^3} u|u|^{\alpha-1}\cdot \Delta u\, dx \\
    & = \int_{\R^3}(u\cdot \nabla u)\cdot \Delta u \dx +\int_{\R^3}\dv
    (v\otimes v)\cdot \Delta u \dx .
  \end{aligned} \hspace{-0.5 cm}
\end{equation}

Multiplying $\eqref{TCM-gen}_2$ by $-\Delta v$, we obtain
\begin{equation} \label{v-eq}
  \begin{aligned}
    \frac{1}{2} \frac{d}{dt} \|\nabla v(t)\|^2 + &\eta\|\Delta
    v(t)\|^2
    -  \sigma_2\int_{\R^3} v|v|^{\beta-1}\cdot \Delta v\, dx \\
    & = \int_{\R^3} (u\cdot \nabla ) v\cdot \Delta v\dx + \int_{\R^3}
    (v \cdot \nabla) u \cdot \Delta v\dx
   + \int_{\R^3} \nabla \theta \cdot \Delta v \dx.
  \end{aligned}
\end{equation}

Taking the $L^2$-product of $\eqref{TCM-gen}_3$ with $-\Delta \theta$,
we find
\begin{equation*} \label{theta-eq} \frac{1}{2} \frac{d}{dt}
  \|\nabla\theta (t)\|^2 +\mu \|\Delta \theta\|^2 = \int_{\R^3}
  (u\cdot \nabla )\theta \cdot \Delta \theta \dx + \int_{\R^3} \dv v
  \, \Delta \theta\, dx.
\end{equation*}

Proceeding as in \cite{BBC-JMAA} (see also \cite{Yuan-Zhang,
  Yuan-Chen}), adding \eqref{u-eq} and \eqref{v-eq}, we obtain
\begin{equation} \label{stima-u-v-H1}
  \begin{aligned}
    \frac{1}{2}\frac{d}{dt} &\|\nabla (u,v,\theta)(t)\|^2 +
    \nu\|\Delta u\|^2 + \eta\|\Delta v\|^2 +\mu \|\Delta \theta\|^2+
    \sigma_1\||u|^{\frac{\alpha-1}{2}}\nabla u\|^2
    \\
    &\qquad + \frac{4\sigma_1(\alpha-1)}{(\alpha+1)^2}\|\nabla
    |u|^{\frac{\alpha+1}{2}}\|^2 +
    \sigma_2\||v|^{\frac{\beta-1}{2}}\nabla v\|^2 +
    \frac{4\sigma_2(\beta-1)}{(\beta+1)^2}\|\nabla
    |v|^{\frac{\beta+1}{2}}\|^2
    \\
    &\quad = \int_{\R^3}(u\cdot \nabla) u\cdot \Delta u \dx
    +\int_{\R^3}\dv (v\otimes v)\cdot \Delta u \dx + \int_{\R^3}
    (u\cdot \nabla ) v\cdot \Delta v\dx
    \\
    &\qquad + \int_{\R^3} (v \cdot \nabla) u \cdot \Delta v\dx +
    \int_{\R^3} \nabla \theta \cdot \Delta v \dx
    \\
    &\qquad +\int_{\R^3} (u\cdot \nabla )\theta \cdot \Delta \theta
    \dx + \int_{\R^3} \dv v \, \Delta \theta\, dx \doteq \sum_{i=1}^7
    J_i.
  \end{aligned}\hspace{-1 cm}
\end{equation}

Let us use $z$ and $w$ to represent $u$, $v$ or even $\theta$. By recalling
the embeddings $H^{\frac{1}{2}}(\mathbb R^3)\hookrightarrow L^3(\mathbb  R^3)$,
$H^1(\mathbb R^3)\hookrightarrow L^6(\mathbb R^3)$, and applying H\"older's, Young's,
Gagliardo--Nirenberg's inequalities, we reach 
\begin{equation*}
  \begin{aligned}
    J \doteq \int_{\R^3}|u||\nabla z| |\Delta w| \, dx &\le
    C\|u\|_3\|\nabla z\|_6 \|\Delta w\|
    \\
    &\le C\|u\|_3^2\|\nabla z\|_6^2 + \varepsilon \|\Delta w\|^2
    \\
    &\le C\|\Lambda^{\frac{1}{2}} u\|^2 \|\Delta z\|^2+\varepsilon
    \|\Delta w\|^2.
  \end{aligned}
\end{equation*}	

Proceeding as for $J$, we infer
\begin{gather*}
    J_1 \leq \int_{\R^3}|u||\nabla u| |\Delta u| \, dx \le
    C\|\Lambda^{\frac{1}{2}} u\|^2 \|\Delta u\|^2+\varepsilon \|\Delta
    u\|^2, \quad J_3 \leq \int_{\R^3}|u||\nabla v| |\Delta v|\, dx \le
    C\|\Lambda^{\frac{1}{2}} u\|^2 \|\Delta v\|^2+\varepsilon \|\Delta
    v\|^2,
    \\
  \textrm{ and }\,\,\,  J_4 \le \int_{\R^3}|v||\nabla u| |\Delta v| \, dx \le
    C\|\Lambda^{\frac{1}{2}} v\|^2 \|\Delta u\|^2+\varepsilon \|\Delta
    v\|^2.
\end{gather*}

Similarly, we have
\begin{equation*}
  \begin{aligned}
    &J_2 \le \int_{\R^3}|v|(|\nabla v|+|\div v|) |\Delta u| \, dx \le
    C\|\Lambda^{\frac{1}{2}} v\|^2 \|\Delta v\|^2+\varepsilon \|\Delta
    u\|^2,
    \\
     &J_6 \leq \int_{\R^3}|u||\nabla \theta| |\Delta \theta|\, dx \le
    C\|\Lambda^{\frac{1}{2}} u\|^2 \|\Delta \theta\|^2+\varepsilon
    \|\Delta \theta\|^2.
  \end{aligned}
\end{equation*}

Finally, we observe that
  \begin{equation*}
    J_5 + J_7 =\int_{\R^3} \nabla \theta \cdot \Delta v \dx +\int_{\R^3}
    \dv  v\,  \Delta \theta \dx =0.
  \end{equation*}

Plugging the above estimates into \eqref{stima-u-v-H1}, we obtain
\begin{equation}
  \label{e:H1}
  \begin{aligned}
    \frac{1}{2}\frac{d}{dt}\|\nabla (u,v, &\theta)(t)\|^2+
    \nu\|\Delta u\|^2 + \eta\|\Delta v\|^2 + \mu \|\Delta \theta\|^2 +
    {\frac{\sigma_1}{2}}\||u|^{\frac{\alpha-1}{2}}\nabla u\|^2
    \\
    & + \frac{4\sigma_1(\alpha-1)}{(\alpha+1)^2}\|\nabla
    |u|^{\frac{\alpha+1}{2}}\|^2
    +{\sigma_2}\||v|^{\frac{\beta-1}{2}}\nabla v\|^2 +
    \frac{4\sigma_2(\beta-1)}{(\beta+1)^2}\|\nabla
    |v|^{\frac{\beta+1}{2}}\|^2
    \\
    \le C & \big(\varepsilon + \|\Lambda^{\frac{1}{2}}
    (u,v,\theta)\|^2\big)\|\Delta (u,v,\theta)\|^2,
  \end{aligned}\hspace{-1 cm}
\end{equation}
and so
\begin{equation} \label{stima-u-v-H1-chiusa}
  \frac{d}{dt} \|\nabla (u,v, \theta)(t)\|^2 + \Big(2\min\{\nu, \eta, \nu\} -
  C\big(\varepsilon + \|\Lambda^{\frac{1}{2}}
  (u,v,\theta)\|^2\big)\Big)\|\Delta (u, v, \theta)\|^2 \leq 0.
\end{equation}
\begin{remark}
The solution, in $\dot H^{\frac12}$-norm, depends continuously on
the time variable, then there exists a positive time $T_1$, with
$0<T_1 < T^\ast$, such that the solution $(u, v, \theta)(t)$ satisfies
$C(\varepsilon +\|\Lambda^{\frac{1}{2}} (u,v,\theta)(t)\|^2)\leq
2\min\{\nu, \eta, \nu\}$, $t\in [0, T_1)$.  Later, in Lemma~\ref{lem:monotone}, we will
show that inequality \eqref{stima-u-v-H1-chiusa} holds true for every
$t=T_1$ with $0<t<T^*$.
\end{remark}

\noindent \textbf{$\dot{H}^{\frac{1}{2}}$-estimates.}
Now, we apply the $\Lambda^{\frac{1}{2}}$-operator to both sides of
$\eqref{TCM-gen}_1$, $\eqref{TCM-gen}_2$ and $\eqref{TCM-gen}_3$, and
we take the scalar product, respectively, with
$\Lambda^{\frac{1}{2}}u$, $\Lambda^{\frac{1}{2}}v$ and
$\Lambda^{\frac{1}{2}}\theta$, then
\begin{equation} \label{H1/2-stima-diff}
  \begin{aligned}
    \frac{1}{2} \frac{d}{dt} \|\Lambda^{\frac{1}{2}} & (u, v, \theta)
    (t) \|^2 + \nu\|\Lambda^{\frac{3}{2}} u\|^2 + \eta
    \|\Lambda^{\frac{3}{2}} v\|^2 + \mu \|\Lambda^{\frac{3}{2}}
    \theta\|^2
    \\
    = & -\int_{\R^3}\Lambda^{\frac{1}{2}}\big( (u\cdot \nabla) u\big)
    \cdot \Lambda^{\frac{1}{2}}u \dx -
    \int_{\R^3}\Lambda^{\frac{1}{2}}\big( (u\cdot \nabla) v\big) \cdot
    \Lambda^{\frac{1}{2}}v \dx
    \\
    & - \int_{\R^3}\Lambda^{\frac{1}{2}}\big( (u \cdot \nabla)
    \theta\big) \cdot \Lambda^{\frac{1}{2}}\theta \dx -
    \sigma_1\int_{\R^3}\Lambda^{\frac{1}{2}}(|u|^{\alpha-1}u)\cdot
    \Lambda^{\frac{1}{2}} u \dx\\
    & - \sigma_2\int_{\R^3}\Lambda^{\frac{1}{2}}(|v|^{\beta-1}v)\cdot
    \Lambda^{\frac{1}{2}} v \dx +\mathcal{K}\doteq \sum_{i=1}^5 K_i
    +\mathcal{K},
  \end{aligned}
\end{equation}
where
\begin{equation*}
  \begin{aligned}
    \mathcal{K}\doteq & -\int_{\R^3}\Lambda^{\frac{1}{2}}\big( (v\cdot
    \nabla) v\big) \cdot \Lambda^{\frac{1}{2}}u \dx -
    \int_{\R^3}\Lambda^{\frac{1}{2}}\big( (v\cdot \nabla) u\big) \cdot
    \Lambda^{\frac{1}{2}}v \dx
    \\
    & - \int_{\R^3}\Lambda^{\frac{1}{2}}\big( v \,\dv v\big)\cdot
    \Lambda^{\frac{1}{2}} u \dx - \int_{\R^3} \Lambda^{\frac{1}{2}}
    \dv v\cdot \Lambda^{\frac{1}{2}} \theta \dx - \int_{\R^3}
    \Lambda^{\frac{1}{2}} \nabla \theta\cdot \Lambda^{\frac{1}{2}} v
    \dx\doteq \sum_{j=1}^5\mathcal{K}_j,
  \end{aligned}
\end{equation*}
and observe that $\mathcal{K}_4+\mathcal{K}_5=0$
  since it coincides with the identity
  \begin{equation} \label{ultima-relazione-aggiunta}
 \int_{\R^3}  \Lambda^{\frac12} \dv v\cdot  \Lambda^{\frac12} \theta
 \dx + \int_{\R^3} \Lambda^{\frac12}\nabla \theta \cdot
 \Lambda^{\frac12} v \dx =0.
\end{equation}

By using H\"older's and Gagliardo--Nirenberg's inequalities, for
$i=1,2,3$ we find (here below $w$ may represent any of $u$, $v$ or $\theta$):
\begin{align} \label{K_i} &\begin{aligned} |K_i|&\leq \|(u \cdot
    \nabla) w \|_{\frac{3}{2}}\|\Lambda w\|_{3} \leq \|u\|_3 \|\nabla
    w\|_3 \|\Lambda w\|_{3}.
  \end{aligned}
\end{align}

For $K_4$, we separate the cases {$5/2 \leq \alpha \le 3$ and  $3 < \alpha <4$}.  
Consider first the case {$3 < \alpha <4$}, to get
  \begin{equation*} \label{K_4.ii}
    \begin{aligned}
      |K_4|&\leq \sigma_1 \|\Lambda^{-\frac{1}{2}}(|u|^{\alpha-1}u)\|
      \|\Lambda^{\frac{3}{2}}u\|
      \\
      &\leq C \||u|^{\alpha-1}u\|_{\frac{3}{2}}
      \|\Lambda^{\frac{3}{2}}u\|
      \\
      & =
      C\|u\|_{\frac{3\alpha}{2}}^{\alpha}\|\Lambda^{\frac{3}{2}}u\|
      \\
            & {\leq C\|\Lambda^\frac12 u\|^{4-\alpha}\, \|\nabla
        u\|^{2(\alpha-2)} \|\Lambda^\frac32 u\|}
      \\
      & {\leq C\|\Lambda^\frac12 u\|^{4-\alpha}\, \|\nabla
        u\|^{2(\alpha-2)-2}\|\nabla u\|^2 \|\Lambda^\frac32 u\|}
      \\
      & {\leq C\|\Lambda^\frac12 u\|^{4-\alpha}\, \|\nabla
        u\|^{2(\alpha-3)}(\|\Lambda^{\frac12} u\|\, \|\Lambda^{\frac32}
        u\|)\,\|\Lambda^\frac32 u\|}
      \\
      & { \leq C\|\Lambda^\frac12 u\|^{5-\alpha}\, \|\nabla
        u\|^{2(\alpha-3)} \,\|\Lambda^\frac32 u\|^2},
    \end{aligned}
  \end{equation*}
 where we used the Gagliardo--Nirenberg's inequality, {and $1<\alpha -2 <2$},
\begin{equation*}
  \|u\|_{\frac{3\alpha}{2}}\leq \|\Lambda^{\frac{1}{2}} u\|^\frac{4-\alpha}{\alpha}
  \|\nabla u\|^{\frac{2(\alpha-2)}{\alpha}}. 
\end{equation*}

\smallskip

  Consider now $K_4$ when {$5/2 \leq \alpha \le 3$}. In
  this case (at least, for $\alpha<3$), we cannot proceed directly as before, since
  {$2(\alpha-2) \le 2$} and hence {$2(\alpha-2)-2\le 0$}. Therefore, we
  manipulate the integral defining $K_4$, in order to make use of
  commutator estimates \eqref{KPV}.  In particular, we take
  advantage of
  \begin{equation} \label{utility-K4}
    \begin{aligned}
      K_4 &=-\sigma_1 \int_{\R^3}\Lambda^{\frac{1}{2}}(|u|^{\alpha-1}u)\cdot
      \Lambda^{\frac{1}{2}} u \dx
      \\
      &=-\sigma_1 \int_{\mathbb R^3}|u|^{\alpha-1}\, \Lambda^\frac12
      u\cdot \Lambda^\frac12 u\, dx -
      \sigma_1\int_{\R^3}\Big(\Lambda^{\frac{1}{2}}(|u|^{\alpha-1}u) -
      |u|^{\alpha-1}\Lambda^\frac12 u \Big)\cdot \Lambda^\frac12 u\dx
      \\
      &=-\sigma_1\||u|^{\frac{\alpha-1}{2}}\, \Lambda^\frac12 u\|^2
      -\sigma_1\int_{\R^3}\Big(\Lambda^{\frac{1}{2}}(|u|^{\alpha-1}u)
      - |u|^{\alpha-1}\Lambda^\frac12 u \Big)\cdot \Lambda^\frac12
      u\dx
      \\
      &\doteq -\sigma_1\||u|^{\frac{\alpha-1}{2}}\, \Lambda^\frac12 u\|^2 -
      \mathcal I,
    \end{aligned}
  \end{equation}
  and $-\sigma_1\||u|^{\frac{\alpha-1}{2}}\, \Lambda^\frac12 u\|^2$
  will be moved to the left-hand side of \eqref{H1/2-stima-diff}.

  Hence, we need to estimate $\mathcal I$, for which we can use
  \eqref{KPV}, as follows. We have
  \begin{equation*}
    \begin{aligned}
      |\mathcal I| &\le C\|\Lambda^\frac12\big(|u|^{\alpha-1}\,
      u\big)-|u|^{\alpha-1}\, \Lambda^\frac12 u\|_{H^{-1}}\,
      \|\Lambda^\frac32 u\|
      \\
      & \le C\|\Lambda^\frac12\big(|u|^{\alpha-1}\,
      u\big)-|u|^{\alpha-1}\, \Lambda^\frac12 u\|_{\frac65}\,
      \|\Lambda^\frac32 u\|,
    \end{aligned}
  \end{equation*}
  where we used duality and the continuous embedding
  $L^\frac{6}{5}(\mathbb R^3) \hookrightarrow \dot H^{-1}(\mathbb R^3)$ (see, e.g., \cite{Zhao}). To
  the first integral on the last line above, we can apply \eqref{KPV}
  with $p=6/5$, $p_1= \frac{3}{\alpha-1}$ and
  $q_1= \frac{6}{7-2\alpha}$ and $s=1/2$. {Hence, by making use of the continuous embeddings
  $H^\frac12(\mathbb R^3) \hookrightarrow L^3(\mathbb R^3)$,
  $W^{\frac12, \frac{3}{\alpha-1}}(\mathbb R^3) \hookrightarrow
  L^{\frac{6}{2\alpha-3}}(\mathbb R^3)$ and also $W^{\frac12, \frac{3}{4-\alpha}}(\mathbb R^3) \hookrightarrow
  L^{\frac{6}{7-2\alpha}}(\mathbb R^3)$ (see \cite[Theorem 6.5]{DiNezza}, the related embeddings and estimates),
  and since the latter one, by duality, implies that $L^\frac{3}{4-\alpha}(\mathbb R^3)
  \hookrightarrow W^{-\frac12, \frac{6}{7-2\alpha}}(\mathbb R^3) $, we obtain the following chain of inequalities}
  \begin{equation*}
    \begin{aligned}
      |\mathcal I| &\le C \Big( \|\nabla
      |u|^{\alpha-1}\|_{\frac{3}{\alpha-1}}\,
      \|\Lambda^{-\frac{1}{2}}u\|_{\frac{6}{7-2\alpha}}+\|u\|_{p_2}\,
      \|\Lambda^{\frac12} |u|^{\alpha-1}\|_{q_2}\Big)\,
      \|\Lambda^\frac32 u\|
      \\
      &\le C\||u|^{\alpha-2}\, \nabla u\|_{\frac{3}{\alpha-1}}\,
      \|\Lambda^{-\frac{1}{2}} u\|_{\frac{6}{7-2\alpha}}\,
      \|\Lambda^{\frac32} u\| + C\,
      \|u\|_{\frac{3}{4-\alpha}}\|\,\|\Lambda^{\frac12}
      |u|^{\alpha-1}\|_{\frac{6}{2\alpha-3}}\|\,
      \|\Lambda^{\frac32} u\|
      \\
      &\le C\big(\||u|^{\alpha-2}\|_{\frac{3}{\alpha-2}}\, \|\nabla u\|_3\big)\,
      \|\Lambda^{-\frac{1}{2}} u\|_{\frac{6}{7-2\alpha}}\,
      \|\Lambda^{\frac32} u\|+C\, \|u\|_{\frac{3}{4-\alpha}}\|\,\|\nabla
      |u|^{\alpha-1}\|_{\frac{3}{\alpha-1}}\|\,
      \|\Lambda^{\frac32} u\|
      \\
      &\le C \|u\|^{\alpha-2}_{\frac{3(\alpha-2)}{\alpha-2}}\,
      \|\Lambda^\frac32 u\|\, \|\Lambda^{-\frac{1}{2}}
      u\|_{\frac{6}{7-2\alpha}}\, \|\Lambda^\frac32 u\|+C\,
      \|u\|_{\frac{3}{4-\alpha}}\,\|\Lambda^\frac12 u\|^{\alpha-2}\,
      \|\Lambda^\frac32 u\|^2
      \\
      &\le C\|u\|_3^{\alpha-2}\,\|u\|_{\frac{3}{4-\alpha}}\,
      \|\Lambda^\frac32 u\|^2+C\,
      \|u\|_{\frac{3}{4-\alpha}}\,\|\Lambda^\frac12 u\|^{\alpha-2}\,
      \|\Lambda^\frac32 u\|^2
      \\
      &\le C\|\Lambda^\frac12
      u\|^{\alpha-2}\,\|u\|_{\frac{3}{4-\alpha}}\, \|\Lambda^\frac32
      u\|^2+C\, \|u\|_{\frac{3}{4-\alpha}}\,\|\Lambda^\frac12
      u\|^{\alpha-2}\, \|\Lambda^\frac32 u\|^2.
    \end{aligned}
  \end{equation*}
  Since {$5/2 \le \alpha \le 3$, we have
  $2\le \bar p\doteq  3/(4-\alpha) \le 3$}, so that
  $\|u\|_{\bar p} \le C\|u\|^{1-\eta}\, \|\Lambda^\frac12 u\|^\eta$,
  with $\eta = 3(\bar p-2)/\bar p = 2\alpha-5$.
  Hence, we obtain
  \begin{equation*}
    |\mathcal I|\le C\, \|\Lambda^{\frac12} u\|^{3\alpha -7}\,\|\Lambda^{\frac32} u\|^2.
  \end{equation*}
  
  Therefore, we have proved that
      \begin{equation*} \label{K_4.ii} \left\{
        \begin{aligned}
          & |K_4| \le C\,\|\Lambda^{\frac12} u\|^{5-\alpha}\, \|\nabla
          u\|^{2(\alpha-3)}\, \|\Lambda^{\frac32} u\|^2, \,\,\,\, &&\mbox{
            if } \ {3< \alpha <4},
          \\
           & |K_4+\sigma_1\||u|^\frac{\alpha-1}{2}\, \nabla u\|^2|= |\mathcal{I}|\le C\,
          \|\Lambda^\frac12 u\|^{3\alpha -7}\,\|\Lambda^{\frac32} u\|^2,
          \,\,\,\, && \mbox{ if } \ {5/2 \le \alpha \le 3}.
        \end{aligned}
      \right.
    \end{equation*}

Similarly, for $K_5$, we have
    \begin{equation*} \label{K_5.ii} \left\{
        \begin{aligned}
          & |K_5| \le C\,\|\Lambda^\frac12 v\|^{5-\beta}\, \|\nabla
          v\|^{2(\beta-3)}\, \|\Lambda^\frac32 v\|^2, \,\,\,\, &&\mbox{
            if } \ {3< \beta <4},
          \\
           & |K_5+\sigma_2 \||v|^\frac{\beta-1}{2}\, \nabla v\|^2| \le C\,
          \|\Lambda^\frac12 v\|^{3\beta -7}\,\|\Lambda^\frac32 v\|^2,
          \,\,\,\, && \mbox{ if } \ {5/2 \le \beta \le 3}.
        \end{aligned}
      \right.
    \end{equation*}

Now, consider $\mathcal{K}= \sum_{j=1}^3\mathcal{K}_j$ and, in particular, we have that
\begin{equation*}
  \begin{aligned}
    |\mathcal{K}_1| + |\mathcal{K}_2| +
    |\mathcal{K}_3|
    & \leq \|\big( (v\cdot \nabla) v\big)\|_{\frac{3}{2}}\|\Lambda u
    \|_3+ \|\big( (v\cdot \nabla) u\big)\|_{\frac{3}{2}} \|\Lambda v
    \|_3 + \| v\dv v \|_{\frac{3}{2}} \|\Lambda u \|_3
    \\
    & \leq C \|\Lambda^{\frac{1}{2}} v\| \|\Lambda^{\frac{3}{2}}
    u\|\|\Lambda^{\frac{3}{2}}v\|\\
    &\leq C \|\Lambda^{\frac{1}{2}} v\| \big( \|\Lambda^{\frac{3}{2}}
    u\|^2 +\|\Lambda^{\frac{3}{2}}v\|^2\big).
  \end{aligned}
\end{equation*}

  We put together all the previous terms, unifying in a single estimate,
  what has been obtained in the two sub-cases of interest, i.e. {$5/2 \le \alpha \le 3$} and {$3 < \alpha < 4$}.
  We argue as in \cite{Chae-Lee, Zhang-Xu}, but as a consequence of the four couples of
estimates coming from $K_4$ and $K_5$, 
we have that
\begin{equation}\label{H1/2-stima-diff.ii}
  \begin{aligned}
    \frac{1}{2} \frac{d}{dt} & \|\Lambda^{\frac{1}{2}} (u, v,
    \theta) (t) \|^2 + \min\{\nu, \eta, \mu\}
    \|\Lambda^{\frac{3}{2}} (u, v, \theta)\|^2 + \Phi(u,\alpha)+ \tilde \Phi (v,\beta)
    \\
    &\leq C\big(\|\Lambda^{\frac{1}{2}} (u, v, \theta)  \| + \mathcal R(u,\alpha) + \tilde{\mathcal R}(v,\beta)
    \big) \|\Lambda^{\frac{3}{2}} (u, v, \theta)\|^2,
  \end{aligned}
\end{equation}
where
\begin{equation*}
\begin{aligned}
 \Phi(u,\alpha)= 
\left\{
\begin{aligned}
  &0, \ &&\mbox{ if } \ {3< \alpha <4},\\
  &\sigma_1\, \||u|^\frac{\alpha-1}{2}\, \Lambda^{\frac12 }u\|^2, \ &&\mbox{ if } \ {5/2 \le \alpha \le 3},
\end{aligned}
 \right.
 \qquad
\tilde{\Phi}(v,\beta)= 
\left\{
\begin{aligned} 
  &0, \ &&\mbox{ if } \ {3< \beta <4}\\
  &\sigma_2\, \||v|^\frac{\beta-1}{2}\, \Lambda^{\frac12}v\|^2, \ &&\mbox{ if } \ {5/2 \le \beta \le 3}, 
\end{aligned}
 \right.
\end{aligned}
\end{equation*}
and
\begin{multline*}
\mathcal R(u, \alpha)\le \left\{
        \begin{aligned}
          &  C\,\|\Lambda^{\frac12}u\|^{5-\alpha}\, \|\nabla
          u\|^{2(\alpha-3)}, \,\, &&\mbox{
            if } \ {3 < \alpha <4},
          \\
           &  C\,
          \|\Lambda^{\frac12} u\|^{3\alpha -7},
          \,\,\,\, && \mbox{ if } \ {5/2 \le \alpha \le 3},
        \end{aligned}
      \right.
\\
      \tilde{\mathcal R}(v, \beta)\le \left\{
        \begin{aligned}
          &  C\,\|\Lambda^{\frac12} v\|^{5-\beta}\, \|\nabla
          v\|^{2(\beta-3)}, \,\, && \mbox{
            if } \ {3< \beta <4},
          \\
           &  C\,
          \|\Lambda^{\frac12}v\|^{3\beta -7},
          \,\,\,\, && \mbox{ if } \ {5/2 \le \beta \le 3}.
        \end{aligned}
      \right.
\end{multline*}
Recalling that $\ell = \min \{\nu, \eta, \mu\}$ as introduced in \eqref{e:local0}, we have
\begin{equation} \label{e:key2}
  \begin{aligned}
    \frac{1}{2} \frac{d}{dt}  \|\Lambda^{\frac{1}{2}} & (u, v,
    \theta) (t) \|^2  +  \Phi(u,\alpha)+ \tilde \Phi (v,\beta)
    \\
    & + \big(\ell - C \|\Lambda^{\frac{1}{2}} (u, v, \theta) (t) \| -
    C\,\mathcal R(u,\alpha) -C\, \tilde{\mathcal R}(v,\beta)\big) \|\Lambda^{\frac{3}{2}} (u, v,
    \theta)\|^2 \leq 0.
  \end{aligned}
\end{equation}

From \eqref{stima-u-v-H1-chiusa} and \eqref{e:key2}, it follows the
following key lemma.

\begin{lem}[Monotonicity of $\dot H^1$- and $\dot H^{\frac12}$-norms]\label{lem:monotone}
  Assume $5/2 \le \alpha, \beta <4$. Let $(u_0, v_0, \theta_0)\in H^2(\mathbb{R}^3)^3$,
  with $\dv u_0 =0$, be such that
   \begin{equation} \label{e:keyii1}
   \ell - C_1\, \big(\|\Lambda^{\frac{1}{2}} (u_0, v_0, \theta_0)  \| -
    \mathcal R(u_0,\alpha) - \tilde{\mathcal R}(v_0,\beta) \big) >0,
    \end{equation}
    and
    \begin{equation} \label{e:keyii2} 2\, \ell - C_2\,\big(\varepsilon
      + \|\Lambda^{\frac{1}{2}} (u_0,v_0,\theta_0)\|^2\big) >0,
    \end{equation}
    where $C_1, C_2>0$ are the constants $C$ in
    \eqref{stima-u-v-H1-chiusa} and \eqref{e:key2}, respectively.
  
    Further, let $(u,v,\theta)(t)$ be the local strong solution of
    \eqref{TCM-gen} and let $T^\ast>0$ be  the first blow-up
    time, as defined in Theorem \ref{th:bu criterion}.
    Then, the following relations hold true
    \begin{equation}
      \label{e:monotonicity1}
      \sup_{0\leq t <T^\ast} \|(u,v,\theta)(t)\|_{\dot H^{\frac12}}
      \le \|(u_0,v_0,\theta_0)\|_{\dot H^{\frac12}}, \ \ \ \
      \sup_{0\leq t <T^\ast} \|(u,v,\theta)(t)\|_{\dot H^1} \le \|(u_0,v_0,\theta_0)\|_{\dot H^1},
    \end{equation}
    and moreover
    \begin{equation}
      \label{e:integrability1}
      \int_{0}^{T^\ast} \|\Delta (u,v,\theta)(t)\|^2\, dt < +\infty.
    \end{equation}
  \end{lem}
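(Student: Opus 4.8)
The plan is a continuity (bootstrap) argument based on the two differential inequalities \eqref{stima-u-v-H1-chiusa} and \eqref{e:key2}. The first point to record is that the explicit bounds displayed just above for $\mathcal R(u,\alpha)$ and $\tilde{\mathcal R}(v,\beta)$ are non-decreasing functions of $\|\Lambda^{\frac12}u\|$ and $\|\nabla u\|$ (resp.\ of $\|\Lambda^{\frac12}v\|$ and $\|\nabla v\|$): in both regimes $5/2\le\alpha\le3$ and $3<\alpha<4$ the occurring exponents $3\alpha-7$, $5-\alpha$ and $2(\alpha-3)$ are nonnegative, and similarly for $\beta$. Hence, as long as $\|(u,v,\theta)(t)\|_{\dot H^{\frac12}}$ and $\|(u,v,\theta)(t)\|_{\dot H^{1}}$ do not exceed the corresponding norms of $(u_0,v_0,\theta_0)$, the coefficient of $\|\Lambda^{\frac32}(u,v,\theta)\|^2$ in \eqref{e:key2} and the coefficient of $\|\Delta(u,v,\theta)\|^2$ in \eqref{stima-u-v-H1-chiusa} are bounded below by the strictly positive quantities supplied by assumptions \eqref{e:keyii1} and \eqref{e:keyii2}.

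Accordingly, I would set
\[
T_0\doteq\sup\Big\{t\in[0,T^\ast):\ \|(u,v,\theta)(s)\|_{\dot H^{\frac12}}\le\|(u_0,v_0,\theta_0)\|_{\dot H^{\frac12}}\ \text{ and }\ \|(u,v,\theta)(s)\|_{\dot H^{1}}\le\|(u_0,v_0,\theta_0)\|_{\dot H^{1}}\ \ \forall\,s\in[0,t]\Big\}
\]
and prove that $T_0=T^\ast$. By the regularity of the local solution given by Lemma~\ref{bound-H2}, the maps $t\mapsto\|\Lambda^{\frac12}(u,v,\theta)(t)\|$ and $t\mapsto\|\nabla(u,v,\theta)(t)\|$ are continuous, hence so are $t\mapsto\mathcal R(u(t),\alpha)$, $t\mapsto\tilde{\mathcal R}(v(t),\beta)$ and the two coefficients above; since at $t=0$ these coefficients are strictly positive, they stay positive on some $[0,\delta]$, and there, discarding the nonnegative terms $\Phi(u,\alpha)+\tilde\Phi(v,\beta)$, \eqref{e:key2} gives $\frac{d}{dt}\|\Lambda^{\frac12}(u,v,\theta)\|^2\le0$ and \eqref{stima-u-v-H1-chiusa} gives $\frac{d}{dt}\|\nabla(u,v,\theta)\|^2\le0$. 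Thus both norms are non-increasing on $[0,\delta]$, so $T_0\ge\delta>0$.

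Suppose now, for contradiction, that $T_0<T^\ast$. The set defining $T_0$ is stable under the limit $t\uparrow T_0$ by continuity, so both bounds hold on all of $[0,T_0]$; combining this with the monotonicity of $\mathcal R,\tilde{\mathcal R}$ and with \eqref{e:keyii1}--\eqref{e:keyii2}, the two coefficients stay bounded below by fixed positive constants on $[0,T_0]$, whence $\|\Lambda^{\frac12}(u,v,\theta)\|^2$ and $\|\nabla(u,v,\theta)\|^2$ are non-increasing on $[0,T_0]$. In particular the coefficients are still strictly positive at $t=T_0$ and, by continuity, on a slightly larger interval $[0,T_0+\delta']\subset[0,T^\ast)$; there \eqref{e:key2} and \eqref{stima-u-v-H1-chiusa} again force the two norms to be non-increasing, so they remain $\le$ their values at $t=0$ on $[0,T_0+\delta']$, contradicting the maximality of $T_0$. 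Hence $T_0=T^\ast$, which is precisely \eqref{e:monotonicity1}.

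It remains to prove \eqref{e:integrability1}. Inserting \eqref{e:monotonicity1} into \eqref{stima-u-v-H1-chiusa}, the coefficient of $\|\Delta(u,v,\theta)\|^2$ is, for every $t\in[0,T^\ast)$, at least $c_2\doteq2\ell-C_2(\varepsilon+\|(u_0,v_0,\theta_0)\|_{\dot H^{\frac12}}^2)>0$ by \eqref{e:keyii2}; integrating $\frac{d}{dt}\|\nabla(u,v,\theta)(t)\|^2+c_2\,\|\Delta(u,v,\theta)(t)\|^2\le0$ over $[0,T]$ yields $c_2\int_0^T\|\Delta(u,v,\theta)(t)\|^2\,dt\le\|\nabla(u_0,v_0,\theta_0)\|^2$ for every $T<T^\ast$, and letting $T\uparrow T^\ast$ gives \eqref{e:integrability1}. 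The only genuinely delicate point in this scheme is the bookkeeping of the continuity argument — in particular, that the equality (and not strict inequality) holding at $t=0$ still lets us open up the interval, which is taken care of by the strictness in \eqref{e:keyii1}--\eqref{e:keyii2} together with the monotonicity of $\mathcal R,\tilde{\mathcal R}$; the remaining steps are routine.
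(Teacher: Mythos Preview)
Your proof is correct and follows essentially the same continuity/bootstrap argument as the paper: both argue that the positivity of the coefficients in \eqref{stima-u-v-H1-chiusa} and \eqref{e:key2} propagates from $t=0$ by continuity, forcing the $\dot H^{\frac12}$- and $\dot H^1$-norms to be non-increasing on all of $[0,T^\ast)$, and then integrate \eqref{stima-u-v-H1-chiusa} to obtain \eqref{e:integrability1}. Your framing via the supremum $T_0$ of times where the norm bounds persist, together with the explicit observation that $\mathcal R$ and $\tilde{\mathcal R}$ are non-decreasing in the relevant norms, is a slightly cleaner bookkeeping than the paper's version (which instead takes the infimum of times where the \emph{derivative} becomes positive), but the underlying mechanism is identical.
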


  Observe that when {$5/2 \le \alpha, \beta \le 3$}, the quantity in
  between the parentheses in \eqref{e:keyii1} is only a function of
  $\|\Lambda^\frac12 (u_0,v_0,\theta_0)\|$, while when $3<\alpha<4$ or
  $3< \beta <4$ $\|\nabla u_0\|$ and/or $\|\nabla v_0\|$ also appear.

  \smallskip

  Thus, we have the following consequence of Lemma~\ref{lem:monotone}.

\begin{cor}
\label{cor:monotone}
  Under the assumptions of Lemma~\ref{lem:monotone}, we have:
  \begin{enumerate}[(i)]
  \item When {$5/2 \le \alpha, \beta \le 3$}, there exists $c_0>0$ such
    that if $\|\Lambda^\frac12 (u_0,v_0,\theta_0)\| \le c_0$, then
    \eqref{e:monotonicity1} and \eqref{e:integrability1} hold
    true.\\[-0.3 cm]

  \item In the other cases, if $\|(u_0, v_0, \theta_0)\|_{H^2} \le R$,
    for some $R>0$, then there exists $c_0(R)>0$ such that if
    $\|\Lambda^\frac12 (u_0,v_0,\theta_0)\| \le c_0(R)$, then
    \eqref{e:monotonicity1} and \eqref{e:integrability1} hold true.
  \end{enumerate}
\end{cor}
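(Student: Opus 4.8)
The plan is to deduce the hypotheses \eqref{e:keyii1} and \eqref{e:keyii2} of Lemma~\ref{lem:monotone} from the smallness of $N_0\doteq\|\Lambda^{\frac12}(u_0,v_0,\theta_0)\|$; once this is done, \eqref{e:monotonicity1} and \eqref{e:integrability1} are precisely the conclusions of that lemma and there is nothing more to prove (the bootstrap along the flow is already built into Lemma~\ref{lem:monotone}). First I would fix $\varepsilon>0$ small enough, once and for all, that $2\ell-C_2\varepsilon>0$; then \eqref{e:keyii2} reduces to requiring $N_0^2<(2\ell-C_2\varepsilon)/C_2$, a constraint of the form $N_0\le c$. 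It remains to make the bracket in \eqref{e:keyii1} small, i.e.\ to bound $\mathcal R(u_0,\alpha)$ and $\tilde{\mathcal R}(v_0,\beta)$ by quantities tending to $0$ as $N_0\to0$.

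For part (i), where $5/2\le\alpha,\beta\le3$, I would just read off from the definitions that $\mathcal R(u_0,\alpha)\le C\,N_0^{3\alpha-7}$ and $\tilde{\mathcal R}(v_0,\beta)\le C\,N_0^{3\beta-7}$, with exponents $3\alpha-7,\ 3\beta-7\in[\tfrac12,2]$, hence strictly positive. Thus the left-hand side of \eqref{e:keyii1} is a continuous function of $N_0$ whose value at $N_0=0$ equals $\ell>0$, and the same holds for \eqref{e:keyii2}. Choosing $c_0>0$ below the first zero of these two functions — a threshold depending only on $\ell,C_1,C_2,\varepsilon,\alpha,\beta$ and \emph{not} on $R$ — the hypotheses of Lemma~\ref{lem:monotone} are met and the conclusion follows.

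Part (ii), i.e.\ $3<\alpha<4$ and/or $3<\beta<4$, is where I expect the only genuine difficulty. Now the relevant bound is $\mathcal R(u_0,\alpha)\le C\,N_0^{5-\alpha}\,\|\nabla u_0\|^{2(\alpha-3)}$, and the factor $\|\nabla u_0\|^{2(\alpha-3)}$ need not be small when $N_0$ is: $\dot H^{\frac12}$-smallness alone does not control $\mathcal R$. The fix is to absorb that factor into the a priori $H^2$-bound: since $\|\nabla u_0\|\le\|u_0\|_{H^2}\le R$ and likewise $\|\nabla v_0\|\le R$, one gets $\mathcal R(u_0,\alpha)\le C\,R^{2(\alpha-3)}\,N_0^{5-\alpha}$ and $\tilde{\mathcal R}(v_0,\beta)\le C\,R^{2(\beta-3)}\,N_0^{5-\beta}$, with exponents $5-\alpha,\ 5-\beta>0$ (if one of $\alpha,\beta$ happens to be $\le3$, use for that term the power bound of part (i) instead). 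For each fixed $R$ the left-hand side of \eqref{e:keyii1} is again continuous in $N_0$ with value $\ell$ at $N_0=0$, so there is $c_0(R)>0$ for which \eqref{e:keyii1}--\eqref{e:keyii2} hold and Lemma~\ref{lem:monotone} applies. Finally, balancing the dominant contribution $C_1C\,R^{2(\alpha-3)}c_0(R)^{5-\alpha}\sim\ell$ yields $c_0(R)\sim R^{-2(\alpha-3)/(5-\alpha)}\to0$ as $R\to+\infty$, which accounts for the degeneracy noted after Theorem~\ref{main}.
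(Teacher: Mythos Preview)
Your proposal is correct and follows precisely the route the paper intends. The paper does not give a separate proof of the corollary; it simply observes, right before stating it, that for $5/2\le\alpha,\beta\le3$ the quantity in \eqref{e:keyii1} depends only on $\|\Lambda^{\frac12}(u_0,v_0,\theta_0)\|$, while for $3<\alpha<4$ or $3<\beta<4$ the factors $\|\nabla u_0\|$, $\|\nabla v_0\|$ appear as well --- and your argument is exactly the explicit unpacking of that remark, including the asymptotic $c_0(R)=o(1)$ noted after the corollary.
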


Observe that $c_0(R) =o(1)$ if $R\to +\infty$.

\begin{proof}[Proof of Lemma~\ref{lem:monotone}]
  From \eqref{e:keyii1}--\eqref{e:keyii2} and the continuous
  dependence in time with respect to the initial data,
  \eqref{stima-u-v-H1-chiusa} and \eqref{e:key2} imply that
  \begin{equation}
    \label{e:mono} \frac{d}{dt}\| \nabla (u,v,\theta)
    (t)\|^2\leq 0\,\,\, \textrm{ and }\,\,\, \frac{d}{dt} \|
    \Lambda^{\frac12} (u,v,\theta)(t)\|^2 \le 0,
  \end{equation}
  for every $0<t<T_1$, for some $T_1>0$. This implies that
  \eqref{e:keyii1} and \eqref{e:keyii2} hold, for every $0<t<T_1$. We
  claim that \eqref{e:mono} must hold true for every $0<t<T^*$, where
  $T^\ast$ is the (supposed) first blow-up time. Assume then, by
  contradiction, that there exists $T_2>0$ with $T_1 < T_2 <T^*$ such
  that
  \begin{equation*}
    \frac{d}{dt}\|\nabla (u,v,\theta)(T_2)\|^2 >0.
  \end{equation*}
  The case in which is $\frac{d}{dt}\|\Lambda^{\frac{1}{2}} (u,v,\theta)(T_2)\|^2 >0$ can be
  treated similarly.

  Now, set
  \begin{equation*}
    \tilde{T}\doteq \inf\left\{\tau >T_1: \, \frac{d}{dt}\|\nabla (u,v,\theta)(\tau)\|^2 >0\right\}.
  \end{equation*}
  Clearly, $\tilde{T}$ is such that for every $0<t<\tilde{T}$,
  \eqref{e:mono} holds true. This implies that there exists a
  neighborhood of $\tilde{T}$, say
  $(\tilde{T}-\delta, \tilde{T}+\delta)$ such that \eqref{e:keyii1}
  and \eqref{e:keyii2} are true. By using \eqref{stima-u-v-H1-chiusa}
  and \eqref{e:key2}, we deduce that also \eqref{e:mono} is true, in
  $(\tilde{T}-\delta, \tilde{T}+\delta)$. This provides a
  contradiction. Hence, we proved \eqref{e:monotonicity1}.
  
  By integrating \eqref{stima-u-v-H1-chiusa}, with respect to time
  variable $t\in [0, T^\ast)$, we get also \eqref{e:integrability1}.
\end{proof}

\subsection{Last step: Boundedness in $\dot H^{\frac32}$-norm}

  As explained in Remark~\ref{rmk-Linfty-H3/2},
  to conclude the proof of Theorem~\ref{main}
  we resort to the energy estimate in the $\dot{H}^{\frac{3}{2}}$-norm.

\smallskip

\noindent \textbf{$\dot{H}^{\frac{3}{2}}$-estimates.}
Taking the operator $\Lambda^{\frac{3}{2}}$ on both sides of
$\eqref{TCM-gen}_1$ and $\eqref{TCM-gen}_2$, and considering the scalar
product with $\Lambda^{\frac{3}{2}} u$ and $\Lambda^{\frac{3}{2}} v$,
respectively, then proceeding in the same way for the third equation
$\eqref{TCM-gen}_3$, and summing up the resulting equations, we deduce
that
\begin{equation} \label{H3/2-stima-diff}
  \begin{aligned}
    \frac{1}{2} \frac{d}{dt} & \|\Lambda^{\frac{3}{2}} (u, v, \theta)
    (t) \|^2 + \min\{\nu, \eta, \mu\} \|\Lambda^{\frac{5}{2}} (u, v,
    \theta)\|^2
    \\
    = & -\int_{\R^3}\Lambda^{\frac{3}{2}}\big( (u\cdot \nabla) u\big)
    \cdot \Lambda^{\frac{3}{2}}u \dx -
    \int_{\R^3}\Lambda^{\frac{3}{2}}\big( (u\cdot \nabla) v\big) \cdot
    \Lambda^{\frac{3}{2}}v \dx - \int_{\R^3}\Lambda^{\frac{3}{2}}\big(
    (u \cdot \nabla) \theta\big) \cdot \Lambda^{\frac{3}{2}}\theta \dx
    \\
    &\qquad \qquad -
    \sigma_1\int_{\R^3}\Lambda^{\frac{3}{2}}(|u|^{\alpha-1}u)\cdot
    \Lambda^{\frac{3}{2}} u \dx -
    \sigma_2\int_{\R^3}\Lambda^{\frac{3}{2}}(|v|^{\beta-1}v)\cdot
    \Lambda^{\frac{3}{2}} v \dx +\mathcal{D},
  \end{aligned}
\end{equation}
where
\begin{equation*}
  \mathcal{D}\doteq -\int_{\R^3}\Lambda^{\frac{3}{2}}\big( (v\cdot \nabla) v\big) \cdot
  \Lambda^{\frac{3}{2}}u \dx - \int_{\R^3}\Lambda^{\frac{3}{2}}\big( (v\cdot \nabla) u\big)
  \cdot \Lambda^{\frac{3}{2}}v \dx - \int_{\R^3}\Lambda^{\frac{3}{2}}\big( v \,\dv v\big)
  \cdot \Lambda^{\frac{3}{2}} u \dx \doteq \sum_{i=1}^{3} \mathcal{D}_i,
\end{equation*}
and we used the identity equivalent to 
\eqref{ultima-relazione-aggiunta}, but with $\Lambda^{\frac32}$ in place of $\Lambda^{\frac12}$.

Then, considering $w=u,v,\theta$, from \eqref{KP}, the embeddings
$H^{\frac{1}{2}}\hookrightarrow L^3(\R^3)$ and
$H^1\hookrightarrow L^6(\R^3)$, and Gagliardo--Nirenberg's inequalities
$\|\Lambda^\frac32 w\|_q \le C\|\nabla w\|^{1-\delta}
\|\Lambda^\frac52 w\|^{\delta}$with $\delta=\frac{2(2q-3)}{3q}$,
$\frac{3}{2}\leq q\leq 6$, and
$\|u\|_\frac{10}{3}\le C \|u\|^\frac{2}{5} \|\nabla u\|^\frac{3}{5}$,
we have
\begin{equation*}
  \begin{aligned}
    \| \Lambda^{\frac{1}{2}}\big( (u\cdot \nabla) w\big) \|
    \|\Lambda^{\frac{5}{2}}w\| &\leq
    C\|u\|_\frac{10}{3}\|\Lambda^{\frac{1}{2}}\nabla w\|_5
    \|\Lambda^{\frac{5}{2}}w\| + C\|\Lambda^{\frac{1}{2}}
    u\|_3\|\nabla w\|_6 \|\Lambda^{\frac{5}{2}}w\|
    \\
    & \leq C \|\nabla u\|^{\frac{3}{5}} \big(\|\nabla
    w\|^{\frac{1}{15}} \|\Lambda^{\frac{5}{2}} w\|^{\frac{14}{15}}
    \big) \|\Lambda^{\frac{5}{2}} u\|+ C\|\nabla u\|\|\Delta w\|
    \|\Lambda^{\frac{5}{2}}w\|
    \\
    & \leq C \|\nabla u\|^\ast\|\nabla w\|^\ast+\varepsilon
    \|\Lambda^{\frac{5}{2}}w\|^2 + C\|\nabla u\|\|\Delta w\|
    \|\Lambda^{\frac{5}{2}}w\|
    \\
    & \leq C \|\nabla u\|^\ast\|\nabla w\|^\ast+\varepsilon
    \|\Lambda^{\frac{5}{2}}w\|^2 + C\|\nabla
    u\|\|\Lambda^{\frac{1}{2}} w\|^{\frac{1}{4}}
    \|\Lambda^{\frac{5}{2}}w\|^{\frac{7}{4}}
    \\
    &\le C \|\nabla (u, v, \theta)\|^\ast + \varepsilon
    \|\Lambda^{\frac{5}{2}} (u, v, \theta)\|^2.
  \end{aligned}
\end{equation*}
  
From here above
on, we make no explicit the exponents of
$\|\nabla (u,v,\theta)\|^\ast$ and
$\|\Lambda^\frac12 (u,v,\theta)\|^\ast$, since as a consequence of
Lemma \ref{lem:monotone}, they are
bounded by a constant
depending on $\|\nabla (u_0,v_0,\theta_0)\|$ and
$\|\Lambda^\frac12 (u_0,v_0, \theta_0)\|$.

Analogously, about the first and third addendum of $\mathcal{D}$, we
infer
\begin{equation*}
  \begin{aligned}
    |\mathcal{D}_1+\mathcal{D}_3| &\le \big(\|
    \Lambda^{\frac{1}{2}}\big( (v\cdot \nabla) v\big) \|
    +\|\Lambda^{\frac{1}{2}}\big( v \,\dv v\big)\|\big) \|
    \Lambda^{\frac{5}{2}}u\|
    \\
    & \leq C\big(\|v\|_{\frac{10}{3}}\|\Lambda^{\frac{1}{2}}\nabla
    v\|_5 + \|\Lambda^{\frac{1}{2}}v\|_3\|\nabla v\|_6
    \big)\|\Lambda^{\frac{5}{2}}u\|
    \\
    &\le C \|\nabla (u, v, \theta)\|^\ast+ \varepsilon\big(
    \|\Lambda^\frac52 u\|^2+\|\Lambda^\frac52 v\|^2\big).
  \end{aligned}
\end{equation*}
Following the same arguments, switching the role of $u$ and $v$ where
necessary, we immediately get
\begin{equation*}
  |\mathcal{D}_2| 
  \le C \|\nabla (u, v, \theta)\|^\ast 
  +  \varepsilon\big( \|\Lambda^\frac52 u\|^2+\|\Lambda^\frac52 v\|^2\big).
\end{equation*}
Let us now consider the term
$\int_{\R^3}\Lambda^{\frac{3}{2}}(|u|^{\alpha-1}u)\cdot
\Lambda^{\frac{3}{2}} u \dx$. We need to separate the case
$5/2\le\alpha <3$ from  the case $3\le \alpha <4$.

\begin{equation*}
  \begin{aligned}
    \left| \int_{\R^3}\Lambda^{\frac{3}{2}}(|u|^{\alpha-1}u)\cdot
      \Lambda^{\frac{3}{2}} u \dx \right| &\leq C \|
    \Lambda^{\frac{1}{2}}\big(|u|^{\alpha-1} u\big) \|
    \|\Lambda^{\frac{5}{2}}u\|
    \\
    &\leq C\big(\||u|^{\alpha-1}\|_p
    \|\Lambda^{\frac{1}{2}}u\|_{\frac{2p}{p-2}} +
    \|\Lambda^{\frac{1}{2}}|u|^{\alpha-1}\|\|u\|_\infty \big)
    \|\Lambda^{\frac{5}{2}}u\|.
  \end{aligned}
\end{equation*}
About the first addendum, we have
\begin{equation*}
  \begin{aligned}
    \||u|^{\alpha-1}\|_p
    \|\Lambda^{\frac{1}{2}}u\|_{\frac{2p}{p-2}}\,\|\Lambda^{\frac{5}{2}}u\|&\le
    C\,\|u\|^{\alpha-1}_{p(\alpha-1)}\,\|\Lambda^\frac12
    u\|^{\frac{p-6}{p}}\,\|\nabla u\|^{\frac{6}{p}}\,
    \|\Lambda^{\frac{5}{2}}u\|
    \\
    & \leq C \,\|\Lambda^\frac12 u\|^{\frac{p-6}{p}}\,\|\nabla
    u\|^{*}\, \|\Lambda^\frac52 u\|^{1+\delta(p)}
    \\
    & \leq C\,\|\nabla u\|^{*} \,\|\Lambda^\frac12
    u\|^{*}+\varepsilon\|\Lambda^{\frac{5}{2}}u\|^2,
  \end{aligned}
\end{equation*}
where we make use of $p>6$, that implies both
\begin{equation*}
  \begin{aligned}
    \|\Lambda^\frac12 u\|_{\frac{2p}{p-2}}&\le C\,\|\Lambda^\frac12
    u\|^{\frac{p-6}{p}}\,\|\nabla u\|^{\frac{6}{p}},
    \\
    \|u\|_{p(\alpha-1)}^{\alpha-1} &\le C\,\|u\|^{*} \|\Lambda^\frac52
    u\|^{\delta(p)} \,\,\,\, \mbox{ with } \
    \delta(p)=\frac{3p(\alpha-1)-6}{5p}.
  \end{aligned}
\end{equation*}

  About the second addendum, we have to separate
  the case $5/2\leq \alpha <3$ from the case $3 \le \alpha <4$.
      
\noindent {\em Case $5/2\leq\alpha\le 3$:}
\begin{equation*}
  \begin{aligned}
    \|\Lambda^{\frac{1}{2}}\,|u|^{\alpha-1}\|\|u\|_\infty \,
    \|\Lambda^{\frac{5}{2}}u\|&\le C\,\|
    |u|^{\alpha-1}\|^{\frac{1}{2}} \|\nabla
    |u|^{\alpha-1}\|^{\frac{1}{2}} \,\|\nabla
    u\|^\frac23\,\|\Lambda^\frac52 u\|^\frac13\,
    \|\Lambda^{\frac{5}{2}}u\|
    \\
    & \leq C\, \|u\|_{2(\alpha-1)}^\frac{\alpha-1}{2}\, \|\nabla
    |u|^{\alpha-1}\|^\frac12\, \|\nabla u\|^*\, \|\Lambda^\frac52
    u\|^\frac{4}{3}
    \\
    & \leq C\, \|\nabla |u|^{\alpha-1}\|^\frac32\, \|\nabla u\|^*+
    \varepsilon \|\Lambda^\frac52 u\|^2
    \\
    & \leq C\, \|\nabla u\|^*+ \varepsilon \|\nabla
    |u|^{\alpha-1}\|^2+ \varepsilon \|\Lambda^\frac52 u\|^2
    \\
    & = C\,\big(1+ \|\nabla u\|^*\big)+ \varepsilon
    \||u|^{\frac{\alpha-1}{2}}\, \nabla u\|^2+ \varepsilon
    \|\Lambda^\frac52 u\|^2,
  \end{aligned}
\end{equation*}
where we applied the following Gagliardo--Nirenberg's inequalities
\begin{equation*}
  \begin{aligned}
    \|u\|_\infty &\le C\, \|\nabla u\|^\frac23\, \|\Lambda^\frac52
    u\|^\frac13,
    \\
    \|u\|_{2(\alpha-1)}&\leq C \|u\|^{1-\delta}\|\nabla
    u\|^\delta,\,\, \textrm{ with }\,\, \delta =
    \frac{3(\alpha-2)}{2(\alpha-1)},
  \end{aligned}
\end{equation*}
together with the next manipulation of the term involving
$\nabla |u|^{\alpha-1}$, i.e.
\begin{equation*}
  \begin{aligned}
    \|\nabla |u|^{\alpha-1}\|^2 &= \int_{\mathbb{R}^3} \big|\nabla
    |u|^{\alpha-1}\big|^2\,dx
    \\
    &= (\alpha-1)\int_{\mathbb{R}^3}|u|^{2(\alpha-2)}\,|\nabla u|^2\,
    dx
    \\
    & =C\int_{\mathbb{R}^3} \big(|u|^{2(\alpha-2)}\, |\nabla
    u|^{\frac{4(\alpha-2)}{\alpha-1}}\big)\,|\nabla
    u|^\frac{6-2\alpha}{\alpha-1}\,dx
    \\
    &\le \varepsilon\int_{\mathbb{R}^3}|u|^{\alpha-1}\, |\nabla
    u|^2\,dx + C\int_{\mathbb{R}^3} |\nabla u|^{2}\, dx
    \\
    &=\varepsilon \||u|^{\frac{\alpha-1}{2}}\nabla u\|^2 + C\,
    \|\nabla u\|^2.
  \end{aligned}
\end{equation*}

\noindent {\em Case $3< \alpha <4$:}
\begin{equation*}
  \begin{aligned}
    \|\Lambda^{\frac{1}{2}}\,|u|^{\alpha-1}\|\|u\|_\infty \,
    \|\Lambda^{\frac{5}{2}}u\|&\le C\, \|
    |u|^{\alpha-1}\|^{\frac{1}{2}} \|\nabla
    |u|^{\alpha-1}\|^{\frac{1}{2}}\, \| u\|_{\infty}\,
    \|\Lambda^{\frac{5}{2}}u\|
    \\
    &\le (\alpha-1)C\,\|u\|_{2(\alpha-1)}^{\frac{\alpha-1}{2}} \|
    |u|^{\alpha-2}\nabla u\|^{\frac{1}{2}} \,\big(\|\nabla
    u\|^{\frac{2}{3}}\|\Lambda^{\frac{5}{2}}u\|^{\frac{1}{3}}\big)\,
    \|\Lambda^{\frac{5}{2}}u\|
    \\
    &\le C\,\|\nabla u\|^{\frac{3(\alpha-2)}{4}+
      \frac{2}{3}}\big(\||u|^{\alpha-2}\|_3 \,\|\nabla
    u\|_6\big)^{\frac{1}{2}}\,
    \|\Lambda^{\frac{5}{2}}u\|^{\frac{4}{3}}
    \\
    &\le C\,\|\Lambda^{\frac{1}{2}}u\|^{\frac{1}{8}}\,\|\nabla
    u\|^{\frac{3(\alpha-2)}{4}+\frac{2}{3}}\,
    \|u\|_{3(\alpha-2)}^{\frac{\alpha-2}{2}}\,\|\Lambda^{\frac{5}{2}}u\|^{\frac{4}{3}+\frac{3}{8}}
    \\
    &\le C\, \|\Lambda^{\frac{1}{2}}u\|^{\frac{4-\alpha}{2}}\|\nabla
    u\|^{*} \|\Lambda^{\frac{5}{2}}u\|^{\frac{4}{3}+\frac{3}{8}}
    \\
    &\leq C\, \|\Lambda^\frac12 (u,v,\theta)\|^{*}\, \|\nabla
    (u,v,\theta)\|^{*}+ \varepsilon \|\Lambda^\frac52
    (u,v,\theta)\|^2,
  \end{aligned}
\end{equation*}
where we used again Gagliardo--Nirenberg's inequality, to get
\begin{gather*}
  \|u\|_\infty \leq C \|\nabla
  u\|^{\frac{2}{3}}\|\Lambda^{\frac{5}{2}}u\|^{\frac{1}{3}}\,\,\,
  \textrm{ and }\,\,\, \|\nabla u\|_6 \leq C
  \|\Lambda^{\frac{1}{2}}u\|^{\frac{1}{4}}\|\Lambda^{\frac{5}{2}}
  u\|^{\frac{3}{4}} 
  \\
  \textrm{ and }\,\, \|u\|_{3(\alpha-2)}\leq
  \|u\|^{\frac{4-\alpha}{2(\alpha-2)}}\|\nabla
  u\|^{\frac{3\alpha-8}{2(\alpha-2)}} \,\,\,\,\,\,\, \mbox{ (since
    $\alpha>8/3$)}.
\end{gather*}
Similarly, we get
\begin{multline*}
  \left| \int_{\R^3}\Lambda^{\frac{3}{2}}(|v|^{\beta-1}v)\cdot
    \Lambda^{\frac{3}{2}}  \dx \right| \leq
    \\
  \left\{
    \begin{aligned}
      &C\,\|\nabla v\|^{*} \,\|\Lambda^\frac12 v\|^{*}+ C\, \big(1+
      \|\nabla v\|^*\big)+ \varepsilon \||v|^\frac{\beta-1}{2}\,
      \nabla v\|^2+ \varepsilon \|\Lambda^\frac52 v\|^2, \,\,\,\,\,
      &&\mbox{ if $5/2\leq\beta \le 3$},
      \\
      &C\, \|\Lambda^\frac12 (u,v,\theta)\|^{*}\, \|\nabla
      (u,v,\theta)\|^{*}+ \varepsilon \|\Lambda^\frac52
      (u,v,\theta)\|^2, \,\,\,\,\, &&\mbox{ if $3< \beta <4$}.
    \end{aligned}
  \right.
\end{multline*}

As a consequence, we get
\begin{equation*} \label{e:H32}
  \begin{aligned}
    \frac{d}{dt} \|\Lambda^{\frac{3}{2}} (u, v, \theta) (t)
    \|^2 + & \big( \min\{\nu, \eta, \mu\} -
    \varepsilon\big)\|\Lambda^{\frac{5}{2}} (u, v, \theta)\|^2
    \\
    &\leq  C\,\|\Lambda^\frac12 (u, v, \theta)\|^*\, \|\nabla
    (u,v,\theta)\|^*
    {+ \varepsilon \||u|^\frac{\alpha-1}{2}\nabla
      u\|^2+\varepsilon\||v|^\frac{\beta-1}{2}\nabla v\|^2,}
  \end{aligned}
\end{equation*}
for every $0<t\leq T$, and $T<T^\ast$.
  
Adding \eqref{e:H32} with \eqref{e:H1}, we obtain in
particular that
\begin{equation*}
  \frac{d}{dt} \big(\|\nabla  (u,v,\theta)(t)\|^2 +\|\Lambda^{\frac32} (u,v,\theta)(t)\|^2\big) \le C_0,
\end{equation*}
from which we get
\begin{equation} \label{e:final}
  \sup_{0\leq t< T^*} \|\Lambda^{\frac32} (u,v,\theta)(t)\|^2 \le C_0 + T^*C_0 < + \infty.
\end{equation}
  
We then proved Theorem \ref{main}, with $c_0$ as in Corollary \ref{cor:monotone}, from \eqref{e:final}, the continuous embedding
    $\dot H^{\frac32}(\mathbb{R}^3)\hookrightarrow \mathrm{BMO}(\mathbb{R}^3)$,
Theorem~\ref{th:bu criterion} and Remark~\ref{rmk-Linfty-H3/2}. 

\medskip

\noindent\textbf{Acknowledgements.} The authors are
  members of the Gruppo Nazionale per l’Analisi Matematica, la
  Probabilità e le loro Applicazioni (GNAMPA) of the Istituto
  Nazionale di Alta Matematica (INdAM). Diego Berti was supported by
  PRIN grant 2020XB3EFL.  
  
  
\appendix
\section{No-damping case} \label{appendix}

This section contains an alternative proof, that relies on the
arguments of the previous sections, of a result already obtained by
\cite{Wang-Zhang-Pan}, that concerns the case of no dampings. 

{If we set $\sigma_1=\sigma_2=0$ in \eqref{TCM-gen}, we recover the
3D Tropical Climate Model without dampings.} Proceeding exactly as in the proof
of Theorem~\ref{th:bu criterion}, we can provide a blow-up criterion in
terms of integrability of powers of \textrm{BMO}-norms. Since in
\eqref{Hs-stima-diff-iniziale} without dampings the integrals $I_7$
and $I_8$ vanish, while for $I_1$--to--$I_6$, \eqref{I1}, \eqref{e:I4-I6}
and \eqref{e:I5} still apply, then, in this case, in place of
\eqref{e:I-dampings}, we have more directly that
\begin{equation}
\label{e:I-no-dampings}
  \begin{aligned} \frac{d}{dt} \big(& 1 + \|\Delta (u, v, \theta) (t)
\|^2\big) + \big(\min(\nu, \eta, \mu) - \varepsilon\big)\|\nabla
\Delta( u, v, \theta)\|^2 \\ &\leq C \Big( 1 +\|u\|_{{}_{\textrm{BMO}}}^{2}
+\|v\|_{{}_{\textrm{BMO}}}^{6} + \|\theta\|_{{}_{\textrm{BMO}}}^2 \Big) \big( 1 +
\|\Delta (u, v, \theta) \|^2\big),
  \end{aligned}
\end{equation}
and therefore the blow-up criterion is given by means of
\begin{equation}
  \tilde\Pi(T) \doteq \int_{0}^{T}\big(1
+\|u(t)\|_{{}_{\textrm{BMO}}}^{2} +\|v(t)\|_{{}_{\textrm{BMO}}}^{6} +
\|\theta(t)\|_{{}_{\textrm{BMO}}}^2\big)\,dt.
\end{equation}

In particular, we have the following blow-up criterion.

\begin{thm}[Blow-up criterion in the no-damping case]  \label{lem:bu-criterion-no-damp}
  Assume $(u_0,v_0, \theta_0)\in
H^2(\mathbb R^3)^3$, with
$\dv u_0=0$. Let $(u,v,\theta)(t)$ be the local strong solution of
\eqref{TCM-gen} with $\sigma_1=\sigma_2=0$. For $0< T^\ast < \infty$, we have that
\begin{gather*}
  \textrm{ $\|\Delta (u, v, \theta) (T)\| < +\infty$, for every $0<T<T^\ast$
    and $\limsup_{T \to {T^\ast}} \|\Delta (u, v, \theta) (T)\|=+\infty$},
\\
\intertext{if and only if}
  \textrm{$\tilde \Pi(T) < +\infty\,\,$, for every $\,\,0<T<T^\ast\,\,$ and $\,\,\tilde \Pi(T^\ast) =+\infty$.}
\end{gather*}
\end{thm}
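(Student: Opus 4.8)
The plan is to mirror, almost verbatim, the proof of Theorem~\ref{th:bu criterion}, simply deleting the two damping integrals. First I would recall that the local strong solution $(u,v,\theta)(t)$ exists on a maximal interval $[0,T^\ast)$ --- local well-posedness (Lemma~\ref{bound-H2}) still holds when $\sigma_1=\sigma_2=0$, since the integrals $I_7$ and $I_8$ in \eqref{Hs-stima-diff-iniziale} are then simply absent and the argument only becomes easier; see also \cite{Ma-Jiang-Wan} --- and that on every $[0,T]$ with $T<T^\ast$ one has $(u,v,\theta)\in L^\infty([0,T];H^2(\R^3))\cap L^2([0,T];H^3(\R^3))$. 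Next I would start from \eqref{Hs-stima-diff-iniziale} with $\sigma_1=\sigma_2=0$, whose right-hand side reduces to $\sum_{i=1}^{6} I_i$, and bound $I_1$ via \eqref{I1}, $I_2$ and $I_3$ via the analogous commutator estimates, $I_4+I_6$ via \eqref{e:I4-I6}, and $I_5$ via \eqref{e:I5} --- this last one being precisely what produces the power $6$ on $\|v\|_{\textrm{BMO}}$. Choosing $\varepsilon$ small enough to absorb the $\varepsilon\|\nabla\Delta(u,v,\theta)\|^2$ contributions into $\min\{\nu,\eta,\mu\}\|\nabla\Delta(u,v,\theta)\|^2$ yields the differential inequality \eqref{e:I-no-dampings}.

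Then I would apply Gronwall's inequality to it to obtain, for every $T<T^\ast$,
\begin{equation*}
  \sup_{0\le t\le T}\big(1+\|\Delta(u,v,\theta)(t)\|^2\big)\le\big(1+\|\Delta(u_0,v_0,\theta_0)\|^2\big)\,e^{\tilde\Pi(T)},
\end{equation*}
and, integrating it in time, also $\int_0^T\|\nabla\Delta(u,v,\theta)(t)\|^2\,dt<+\infty$ whenever $\tilde\Pi(T)<+\infty$. From here the equivalence is formal. For one direction: if $\tilde\Pi(T)<+\infty$ for every $T<T^\ast$ and $\tilde\Pi(T^\ast)=+\infty$, the displayed bound gives $\|\Delta(u,v,\theta)(T)\|<+\infty$ for $T<T^\ast$, while if $\limsup_{T\to T^\ast}\|\Delta(u,v,\theta)(T)\|$ were finite one could, using also the energy estimate \eqref{bound-L2-integral} to control the $L^2$-norms, bound $\sup_{0\le t<T^\ast}\|(u,v,\theta)(t)\|_{H^2}$, and then, since $\dot H^{3/2}(\R^3)\hookrightarrow\textrm{BMO}(\R^3)$ (hence $H^2\hookrightarrow\textrm{BMO}$), conclude $\tilde\Pi(T^\ast)<+\infty$ --- a contradiction. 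The converse direction is the mirror image: if $\|\Delta(u,v,\theta)(T)\|<+\infty$ for $T<T^\ast$ and $\limsup_{T\to T^\ast}\|\Delta(u,v,\theta)(T)\|=+\infty$, then the same embeddings give $\tilde\Pi(T)<+\infty$ for $T<T^\ast$, and if $\tilde\Pi(T^\ast)$ were finite the Gronwall bound would make $\sup_{0\le t<T^\ast}\|\Delta(u,v,\theta)(t)\|$ finite, again a contradiction.

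The only genuinely non-routine point is in this last paragraph: one has to combine the continuation criterion for the local theory with the embeddings $H^2(\R^3)\hookrightarrow\dot H^{3/2}(\R^3)\hookrightarrow\textrm{BMO}(\R^3)$, i.e.\ to check that a uniform-in-time bound on $\|\Delta(u,v,\theta)\|$ simultaneously prevents blow-up at $T^\ast$ and controls the $\textrm{BMO}$-norms entering $\tilde\Pi(T^\ast)$. Everything else is the very same chain of estimates already carried out for Theorem~\ref{th:bu criterion}, with the $\alpha$- and $\beta$-dependent terms simply removed, so I do not expect any new difficulty there.
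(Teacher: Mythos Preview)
Your proposal is correct and follows essentially the same route as the paper: drop $I_7$ and $I_8$ from \eqref{Hs-stima-diff-iniziale}, reuse the estimates \eqref{I1}, \eqref{e:I4-I6}, \eqref{e:I5} for $I_1$--$I_6$ to arrive at \eqref{e:I-no-dampings}, and close with Gronwall. In fact you spell out the two directions of the equivalence (via the embedding $H^2\hookrightarrow\dot H^{3/2}\hookrightarrow\mathrm{BMO}$ and the continuation principle) more explicitly than the paper does, which simply states the criterion once \eqref{e:I-no-dampings} is reached.
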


For the $\dot H^{1}$-estimates, all the computations
coming from \eqref{stima-u-v-H1} apply, except for the fact that on
the left-hand side of it, no terms coming from the dampings
appear. Hence, inequality \eqref{stima-u-v-H1-chiusa} still holds true.
Concerning the $\dot H^{\frac12}$-estimates, from
\eqref{H1/2-stima-diff}, $K_4=K_5=0$, while all the other integrals
are the same. So the same estimates apply and hence, in place of
\eqref{H1/2-stima-diff.ii}, now we have
\begin{equation*}
    \frac{1}{2} \frac{d}{dt}  \|\Lambda^{\frac{1}{2}} (u, v, \theta) (t) \|^2  + \min\{\nu,
\eta, \mu\} \|\Lambda^{\frac{3}{2}} (u, v, \theta)\|^2 \leq
C\|\Lambda^{\frac{1}{2}} (u, v, \theta)\| \|\Lambda^{\frac{3}{2}} (u, v, \theta)\|^2.
\end{equation*}

This implies that we have the following companion of Lemma \ref{lem:monotone}:

\begin{lemma}[Monotonicity of $\dot H^1$- and $\dot H^{\frac12}$-norms in
  the no-damping case]
  Assume that $(u_0, v_0, \theta_0)\in H^2(\mathbb{R}^3)^3$,
with $\dv u_0=0$. Let $(u,v,\theta)(t)$ be the local solution of
\eqref{TCM-gen} with $\sigma_1=\sigma_2=0$.  If
   \begin{equation} \label{e:smallness-no-damp-one} \ell - C
\|\Lambda^{\frac{1}{2}} (u_0, v_0, \theta_0) \| >0,
    \end{equation} and
    \begin{equation} \label{e:smallness-no-damp-two} 2\, \ell -
C\big(\varepsilon + \|\Lambda^{\frac{1}{2}}
(u_0,v_0,\theta_0)\|^2\big) >0,
  \end{equation} then the following relations hold true
  \begin{equation*}
    \sup_{0\leq t <T^\ast} \|(u,v,\theta)(t)\|_{\dot H^{\frac12}} \le \|(u_0,v_0,\theta_0)\|_{\dot H^{\frac12}}, \,\, \textrm{ and }\,\,
\sup_{0\leq t <T^\ast} \|(u,v,\theta)(t)\|_{\dot H^1} \le \|(u_0,v_0,\theta_0)\|_{\dot H^1}.
\end{equation*}
\end{lemma}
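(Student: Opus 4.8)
The plan is to run the argument of Lemma~\ref{lem:monotone} almost verbatim, the point being that it simplifies here because there are no damping contributions to track. First I would record the two differential inequalities available in this case. The $\dot H^1$-estimate is unchanged, i.e. \eqref{stima-u-v-H1-chiusa} holds, which (recalling $\ell=\min\{\nu,\eta,\mu\}$) reads
\begin{equation*}
\frac{d}{dt}\|\nabla(u,v,\theta)(t)\|^2 + \Big(2\ell - C\big(\varepsilon + \|\Lambda^{\frac12}(u,v,\theta)(t)\|^2\big)\Big)\|\Delta(u,v,\theta)(t)\|^2 \le 0,
\end{equation*}
while, since $K_4=K_5=0$ when $\sigma_1=\sigma_2=0$, the $\dot H^{\frac12}$-estimate collapses to
\begin{equation*}
\frac{d}{dt}\|\Lambda^{\frac12}(u,v,\theta)(t)\|^2 + 2\big(\ell - C\|\Lambda^{\frac12}(u,v,\theta)(t)\|\big)\|\Lambda^{\frac32}(u,v,\theta)(t)\|^2 \le 0,
\end{equation*}
with $C$ the constants occurring, respectively, in these two inequalities; hypotheses \eqref{e:smallness-no-damp-one}--\eqref{e:smallness-no-damp-two} say precisely that the two parenthetical coefficients are positive when evaluated at the initial datum.

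Next I would use that, by the local well-posedness of Lemma~\ref{bound-H2}, the strong solution lies in $L^\infty_t H^2\cap L^2_t H^3$ on $[0,T^\ast)$, so that $t\mapsto \|\nabla(u,v,\theta)(t)\|^2$ and $t\mapsto\|\Lambda^{\frac12}(u,v,\theta)(t)\|^2$ are absolutely continuous and the sign of their derivatives is meaningful a.e. Combining this with continuity in time and \eqref{e:smallness-no-damp-one}--\eqref{e:smallness-no-damp-two}, there is $T_1\in(0,T^\ast)$ such that on $[0,T_1)$ both coefficients above stay positive, hence both time-derivatives are $\le 0$ there; in particular $\|\Lambda^{\frac12}(u,v,\theta)(t)\|\le\|\Lambda^{\frac12}(u_0,v_0,\theta_0)\|$ on $[0,T_1)$, so \eqref{e:smallness-no-damp-one}--\eqref{e:smallness-no-damp-two} persist with $(u,v,\theta)(t)$ in place of the datum for every $t\in[0,T_1)$.

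The core of the proof is then the continuation argument: set
\begin{equation*}
\tilde T \doteq \inf\Big\{\tau>0:\ \tfrac{d}{dt}\|\nabla(u,v,\theta)(\tau)\|^2>0\ \text{ or }\ \tfrac{d}{dt}\|\Lambda^{\frac12}(u,v,\theta)(\tau)\|^2>0\Big\},
\end{equation*}
and suppose, for contradiction, that $\tilde T<T^\ast$. On $[0,\tilde T)$ both norms are non-increasing, hence remain bounded by the corresponding norms of the data; by continuity the coefficients $\ell - C\|\Lambda^{\frac12}(u,v,\theta)(t)\|$ and $2\ell - C(\varepsilon+\|\Lambda^{\frac12}(u,v,\theta)(t)\|^2)$ stay positive on a full neighbourhood $(\tilde T-\delta,\tilde T+\delta)$, and feeding this back into the two displayed inequalities forces both time-derivatives to be $\le 0$ on that neighbourhood, contradicting the definition of $\tilde T$. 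Hence $\tilde T\ge T^\ast$, which yields the two monotonicity bounds on $[0,T^\ast)$. (Integrating the $\dot H^1$-inequality in time as for \eqref{e:integrability1} would, in addition, give $\int_0^{T^\ast}\|\Delta(u,v,\theta)\|^2\,dt<+\infty$, though this is not part of the statement.)

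I do not anticipate a serious obstacle here: the only delicate points are the simultaneous bootstrap for the $\dot H^{\frac12}$- and $\dot H^1$-norms and the pointwise a.e.\ comparison of $\frac{d}{dt}\|\cdot\|^2$, both of which are handled exactly as in the proof of Lemma~\ref{lem:monotone}.
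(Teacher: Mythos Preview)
Your proposal is correct and follows essentially the same approach as the paper: the appendix lemma is stated there as a direct companion of Lemma~\ref{lem:monotone}, with no separate proof, and your argument reproduces that proof with the obvious simplification $K_4=K_5=0$ (so $\mathcal R=\tilde{\mathcal R}=0$). Your unified definition of $\tilde T$ covering both norms at once is a minor cosmetic variant of the paper's version, which treats the two cases separately.
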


Then, arguing as for the $\dot H^{\frac32}$-estimates in the case with damping,
from \eqref{e:H32} we get directly
\begin{equation*} \label{e:H32-nd}
    \frac{d}{dt} \|\Lambda^{\frac{3}{2}} (u, v, \theta) (t)
    \|^2 +  \big( \min\{\nu, \eta, \mu\} -
    \varepsilon\big)\|\Lambda^{\frac{5}{2}} (u, v, \theta)\|^2
    \leq  C\,\|\Lambda^\frac12 (u, v, \theta)\|^*\, \|\nabla
    (u,v,\theta)\|^*,\,\,\, \textrm{for $t\in [0, T^\ast)$.} 
  \end{equation*}
Suppose that $\|(u_0,v_0, \theta_0)\|_{\dot H^{\frac12}}$ is small enough
to fulfill \eqref{e:smallness-no-damp-two} and
\eqref{e:smallness-no-damp-one}.  Let $T>0$ be any real number less
than $T^\star$. Since $v\in L^\infty(0,T; \dot H^{\frac32}(\mathbb R^3)) \cap
L^2(0,T; \dot H^{\frac52}(\mathbb R^3))$, we deduce that $v\in
L^6(0,T; \dot H^{\frac32}(\mathbb R^3))$.  Using $\dot H^{\frac32}(\mathbb R^3)
\hookrightarrow \textrm{BMO}(\mathbb R^3)$, we conclude that $v\in L^6(0,T;
\textrm{BMO}(\mathbb R^3))$ and, in particular, $v\in L^2(0,T; \textrm{BMO}(\mathbb
R^3))$.  Similarly, $u, \theta\in L^2(0,T; \textrm{BMO}(\mathbb R^3))$.
Consequently, $\tilde \Pi(T^\ast)$ must be finite.
Hence, the blow-up criterion given by Theorem~\ref{lem:bu-criterion-no-damp}
works, and this completes the proof of Theorem~\ref{main}, in the no-damping case.


\begin{thebibliography}{99}

\footnotesize
	
\bibitem{Ad} Adams, R. A.,\ Fournier, J. F., \emph{Sobolev Spaces, 2nd
    ed.}, Elsevier/Academic Press, Amsterdam, 2003.

\bibitem{BBC-JMAA} Berti, D.,\ Bisconti, L.,\ Catania, D., \emph{A
    regularity criterion for a 3D tropical climate model with
    damping}, J. Math. Anal. Appl., 518 (2023) 126685.

\bibitem{BBC-AA} Berti, D.,\ Bisconti, L.,\ Catania, D., \emph{Global
    attractor for the three-dimensional Bardina tropical climate
    model}, Applicable Analysis, available online: 17 Jan 2023,
  DOI:  \url{https://doi.org/10.1080/00036811.2023.2166494}\newline
  \url{https://www.tandfonline.com/doi/full/10.1080/00036811.2023.2166494}

  \bibitem{Bis-2021} Bisconti, L., \emph{A regularity criterion for a 2D
    tropical climate model with fractional dissipation}, Monatshefte
  fur Mathematik \textbf{194} (2021), no. 4, 719--736.
  
%

%

\bibitem{Chae} Chae, D.,\ Degond, P.,\ Liu, J. G,. \emph{Well-posedness
    for Hall-magnetohydrodynamics}, Inst. H. Poincare. Anal. Non
  Lineaire, \textbf{31} (2014), 555--565.

\bibitem{Chae-Lee} Chae, D.,\  Lee, J., \emph{On the blow-up criterion and
    small data global existence for the Hall-magnetohydrodynamics},
  Journal of Differential Equations, \textbf{256} (2014), no. 11,
  pp. 3835--3858.
    
\bibitem{Chen-Yuan-Zhang} Chen, X.,\ Yuan, B.,\ Zhang, Y., \emph{Global
    large solution for the tropical climate model with diffusion.}
  Rocky Mountain J. Math. \textbf{51} (2021), no. 4, 1209--1219. 
  
 \bibitem{DiNezza} Di Nezza, E.,\ Palatucci, G.,\ Valdinoci, E.,
\emph{Hitchhiker's guide to the fractional Sobolev spaces.}
Bull. Sci. Math. \textbf{136} (2012), no. 5, 521--573.

\bibitem{Dong-2020} Dong, B., \ Wu, J.,\ Ye, Z., \emph{2D tropical climate model
with fractional dissipation and without thermal diffusion}, Commun.
Math. Sci. 18 (2020), no. 1, 259–292.
	
\bibitem{Dong-Wu-2019} Dong, B.,\ Wang, W.,\ Wu, J.,\ Zhang, H.,
  \emph{Global regularity results for the climate model with
    fractional dissipation}, Discrete Contin. Dyn. Syst. Ser. B
  \textbf{24} (2019), no. 1, 211--229.  

\bibitem{Frierson} Frierson, D.,\ Majda, A.,\ Pauluis, O., \emph{Large
    scale dynamics of precipitation fronts in the tropical atmosphere:
    A novel relaxation limit}, Comm. Math. Sci. \textbf{2} (2004),
  591--626.
      

\bibitem{G-2019} Grafakos, L.,\ Oh, S., \emph{The Kato-Ponce
    inequality}, Comm. Partial Differential Equations, \textbf{39}
  (2019), no. 6, 1128--1157.

    

\bibitem{Kato-Ponce-1988} Kato, T.,\ Ponce, G., \emph{Commutator
    estimates and the Euler and Navier--Stokes equations}, Comm. Pure
  Appl. Math. \textbf{41} (1988), no. 7, 891--907.

\bibitem{KPV1991} Kenig, C. E.,\ Ponce, G.,\ Vega L.,
  \emph{Well-posedness of the initial value problem for the
    Korteweg-deVries equation,} J. Amer. Math. Soc. \textbf{4} (1991),
  no. 2, 323--347.
    
\bibitem{Kozono2000} Kozono, H.,\ Taniuchi, Y., \emph{Bilinear
    estimates in BMO and the Navier--Stokes equations},
  Math. Z. \textbf{235} (2000), no.1, 173--194.
    
\bibitem{Kozono2002} Kozono, H.,\ Ogawa, T.,\ Taniuchi, Y., \emph{The
    critical Sobolev inequalities in Besov spaces and regularity
    criterion to some semi-linear evolution equations}, {Math. Zeit.}
  \textbf{242} (2002), no. 2, 251-278.

\bibitem{Li-Titi} Li, J.,\ Titi, E. S., \emph{Global well-posedness of
    strong solutions to a tropical climate model}, {Discrete and
    Continuous Dynamical Systems. Series A} \textbf{36} (2016),
  no. {8}, {4495--4516}.
  
\bibitem{Li-Titi-1} Li, J.,\ Titi, E. S., \emph{A tropical atmosphere
    model with moisture: global well-posedness and relaxation limit},
  Nonlinearity 29 (2016), no. \textbf{9}, 2674--2714. 
  
\bibitem{Li-Zhai-Yin} Li, J.,\ Zhai, X.,\ Yin, Z., \emph{On the global
    well-posedness of the tropical climate model}, ZAMM
  Z. Angew. Math. Mech. \textbf{99} (2019), no. 6, e201700306, 17 pp.

\bibitem{Ma-Jiang-Wan} Ma, C.,\ Jiang, Z.,\ Wan, R., \emph{Local
    well-posedness for the tropical climate model with fractional
    velocity diffusion}, Kinet.  Relat. Models \textbf{9} (2016),
  no. 3, 551--570.

\bibitem{Ma-Wan} Ma, C.,\ Wan, R., \emph{Spectral analysis and global
    well-posedness for a viscous tropical climate model with only a
    damp term}, {Nonlinear Anal. Real World Appl.}, \textbf{39}
  (2018), 554--567.
 
\bibitem{Maj} Majda, A., \emph{Introduction to PDEs and Waves for the
    Atmosphere and Ocean}, Courant Lecture Notes in Mathematics,
  9. New York University, Courant Institute of Mathematical Sciences,
  New York; American Mathematical Society, Providence, RI, 2003.
 
\bibitem{Nirenberg-1959} Nirenberg, L.,  \emph{On elliptic partial
    differential equations}, Ann. Scuola Norm. Sup. Pisa Cl. Sci. (3)
  13 (1959), 115–162.
  
    \bibitem{Niu} Niu, D., \ Wu, H., \emph{Global strong solutions and large-time
behavior of 2D tropical climate model with zero thermal diffusion},
Math Meth Appl Sci. 2022;45:9341–9370.

\bibitem{Wan} Wan, R., \emph{Global small solutions to a tropical
    climate model without thermal diffusion}, {J. Math. Phys.}
  \textbf{57} (2016), no. {2}, {021507, 13 pp.}
  
\bibitem{Wang-Zhang-Pan} Wang, Y., Zhang, S., Pan, N.,
  \emph{Regularity and Global Existence on the 3D Tropical Climate
    Model}, Bull. Malays. Math. Sci. Soc. \textbf{43} (2020), no. 1,
  641--650.
  
    \bibitem{Ye}  Ye, Z., \emph{Global regularity for a class of 2D tropical climate model},
J. Math. Anal. Appl. 446, 307–321 (2017).

\bibitem{Ye-Zhu} Ye, X., \ Zhu, M., \emph{Global strong solutions of the tropical climate model
with temperature-dependent diffusion on the barotropic mode}, Applied
Mathematics Letters \textbf{89} (2019) 8–14.
  
\bibitem{Yuan-Chen} Yuan, B.,\ Chen, X., \emph{Global regularity for
    the 3D tropical climate model with damping},
  Appl. Math. Lett. \textbf{121} (2021), Paper No. 107439, 6 pp.
    
\bibitem{Yuan-Zhang} Yuan, B.,\ Zhang, Y., \emph{Global strong solution
    of 3D tropical climate model with damping}, Front. Math. China
  \textbf{16} (2021), no. 3, 889--900.
 
  
\bibitem{Zhang-Xu} Zhang, H.,\ Xu, J., \emph{Global existence for the 3D
    tropical climate model with small initial data in
    $\dot H^{1/2}(\R^3)\ast$.} J. Math. 2022, Art. ID 3945178, 6 pp.
    
  \bibitem{Zhao}  Zhao, X., \emph{Decay estimates for three-dimensional
    Navier--Stokes equations with damping}, Nonlinear Analysis: Real
  World Applications \textbf{60} (2021) Article number: 103282.

    
\bibitem{Zhu} Zhu, M., \emph{Global regularity for the tropical
    climate model with fractional diffusion on barotropic mode},
  Appl. Math. Lett. \textbf{81} (2018), 99--104.             
\end{thebibliography}
\end{document}